\newtheorem{theorem}{Theorem}[section]
\newtheorem{lemma}[theorem]{Lemma}
\newtheorem*{lemma*}{Lemma}
\newtheorem{definition}[theorem]{Definition}
\newtheorem*{definition*}{Definition}
\newtheorem{prop}[theorem]{Proposition}
\newcommand{\Z}{\mathbb{Z}}
\newcommand{\N}{\mathbb{N}}
\newcommand{\CA}{\mathcal{A}}
\title[complexity threshold]{The complexity threshold for the emergence of Kakutani inequivalence}
\author{Van Cyr}
\address{Bucknell University, Lewisburg, PA 17837 USA}
\email{van.cyr@bucknell.edu}
\author{Aimee Johnson} 
\address{Swarthmore College, Swarthmore, PA 19081 USA}
\email{aimee@swarthmore.edu}
\author{Bryna Kra}
\address{Northwestern University, Evanston, IL 60208 USA}
\email{kra@math.northwestern.edu}
\author{Ay\c se \c Sah\. in}
\address{Wright State University, Dayton, OH 45435 USA}
\email{ayse.sahin@wright.edu}
\subjclass[2010]{37B10, 68R15}
\keywords{subshift, block complexity, loosely Bernoulli}
\date{July 17, 2020}
\thanks{The authors thank the  Casa Matem\'atica Oaxaca (CMO) for hosting 
the ``Symbolic Dynamical Systems'' workshop during which this work was started.  
The third author was partially supported by NSF grant DMS-1800544.}
\begin{document}

\begin{abstract}
We show that linear complexity is the threshold for the emergence of Kakutani inequivalence for measurable systems supported on a minimal subshift.  In particular, we show that there are minimal subshifts of arbitrarily low super-linear complexity that admit both loosely Bernoulli and non-loosely Bernoulli ergodic measures and that no minimal subshift with linear complexity can admit inequivalent measures.  
\end{abstract}

\maketitle

\section{Complexity and Kakutani equivalence}
\subsection{Block complexity and constraints on the system}
The growth rate of the complexity function of a symbolic dynamical system gives rise to combinatorial invariants that allow for a finer classification of zero entropy systems and can be an obstruction for realizing certain dynamic properties.  For example, the Morse-Hedlund Theorem~\cite{MH} says that if the number of $n$ blocks in the language of a subshift grows more slowly than $n+1$ then the subshift is periodic, putting a lower bound on the growth rate for the emergence of interesting behavior.  Boshernitzan showed~\cite{Boz} that the complexity bound on a minimal subshift with linear complexity constrains the number of ergodic measures supported by the subshift and provided precise bounds related to the linear growth rate of the complexity function (two of the authors~\cite{CK} lifted the assumption of minimality when counting nonatomic measures).  Ferenczi~\cite{Fer} showed that any two ergodic measure preserving systems supported on a minimal subshift with 
linear complexity are even Kakutani equivalent and more specifically are loosely Bernoulli (see Section~\ref{sec:background} for definitions).  

For each such result, it is natural to explore the complexity threshold where the constraint is no longer present.   It was shown in~\cite{CK} that linear complexity is the threshold for Bozhernitzan's result.  In particular, given arbitrarily low superlinear complexity there exists a minimal subshift with at most that complexity that supports uncountably many invariant ergodic measures.   Here we show that linear complexity growth is also the threshold for Ferenczi's result: we show that there are minimal subshifts with arbitrarily slow superlinear complexity growth which support Kakutani inequivalent measures.

\subsection{Even Kakutani equivalence and loosely Bernoulli}
Recall that an orbit equivalence between two measurable systems $(X,T,\mu)$ and $(Y,S,\nu)$ is a bi-measurable, measure preserving map $\phi\colon X\to Y$ that maps orbits to orbits.   Ornstein's seminal result~\cite{ornstein} states that two Bernoulli systems are measure theoretically isomorphic if and only if they have equal entropy.   On the opposite end of the spectrum, Dye's theorem~\cite{dye1,dye2} states that any two measurable systems are measurably orbit equivalent if and only if they are ergodic.   
These are the two extremes of orbit equivalence, with the first
 an isomorphism preserving order on orbits and the second permuting points in an orbit without any restriction beyond measurability.  Even Kakutani equivalence lies in between: an orbit equivalence between two ergodic systems $(X,T,\mu)$ and $(Y,S,\nu)$   is an {\em even Kakutani equivalence} if there exist sets of equal positive measure $A\subset X$ and $B\subset Y$ such that the measure preserving map 
 $\phi\colon X\to Y$ is order preserving on $A$.  Namely, if and only if the induced transformations $T_A$ and $S_B$ are measurably isomorphic. 
 
Feldman, in~\cite{feldman}, was the first to use the term Kakutani equivalence to describe this equivalence relation given by inducing.  In that paper he introduced the property of loosely Bernoulli, which he showed to be an invariant for Kakutani equivalence.  He also constructed the first example of a zero entropy non-loosely Bernoulli system, which he used to build  the first example of a  K and not Bernoulli automorphism which is not  Kakutani equivalent to any Bernoulli.   He defined the loosely Bernoulli property by introducing a new metric to use in the definition of very weak Bernoulli, a key ingredient in Ornstein theory.   This new metric, called $\overline f$, weakens the $ \overline d$ metric to capture the effect of an even Kakutani equivalence on orbits. 

Feldman's work was extended by Ornstein, Rudolph, and Weiss~\cite{ORW} who developed the general equivalence theory for this metric.  They showed that the loosely Bernoulli transformations play the role for even Kakutani equivalence that the Bernoulli transformations play in the isomorphism theory.  
In particular, two loosely Bernoulli transformations are even Kakutani equivalent if and only if they have equal entropy.  This result, and the work in~\cite{ORW}, is the motivating example for the more general theory of restricted orbit equivalence developed by Rudolph in~\cite{Rud}.   
 He showed that  if an orbit equivalence satisfies certain regularity conditions then there is always a distinguished family of transformations playing the role of Bernoulli transformations for the associated equivalence relation.   Finally, we note that Katok~\cite{katok1, katok} independently defined the $\overline f$ metric and proved the equivalence theorem in the zero entropy category, using the term {\em standard} to describe the loosely Bernoulli family of transformations.

The loosely Bernoulli class of systems contains all Bernoulli transformations  but is strictly larger, even in the positive entropy category.  Here we focus our attention on the zero entropy transformations.
The simplest characterization of zero entropy loosely Bernoulli systems is that they are the family of transformations that induce rotations.  
For this reason, they are sometimes referred to as loosely Kroenecker systems.
Examples of zero entropy loosely Bernoulli systems include rotations and, 
more generally, all finite rank systems~\cite{ORW}.  
Many more examples exist, and the
study of the loosely Bernoulli property and the role of the $\overline f$ metric continues to be an active area of research; see for example~\cite{GK, KVW, GRK,KW,KL}.  

\subsection{Complexity and loosely Bernoulli}
Turning to complexity and symbolic systems, for a subshift $(X,\sigma)$, let $P(n)$ denote the block complexity of $X$, meaning 
the number of words of length $n$ that occur in any $x\in X$ (see Section~\ref{sec:background} for precise definitions).    Ferenczi~\cite[Proposition 4]{Fer} showed that if a minimal subshift has low complexity, namely $P_X(n)= O(n)$, then it has finite rank.  More generally, essentially using Ferenczi's proof, we check (see Appendix~\ref{appendix}) that the same result holds under the milder assumption that  $
\liminf_{n\to\infty} \frac{P_X(n)}{n} < \infty$. 
As finite rank transformations are loosely Bernoulli, we can rephrase Ferenczi's result in the language of Kakutani equivalence: if a minimal subshift has linear complexity, then all invariant ergodic measures on the subshift give rise to measurable systems that are even Kakutani equivalent. 

We show that linear complexity is the threshold for which this result holds.  In particular, our main result shows that there are minimal subshifts of arbitrary low super-linear complexity that admit both loosely Bernoulli and non-loosely Bernoulli ergodic measures: 
\begin{theorem}\label{thm:main}
Let $(p_n)_{n\in\N}$ be a non-decreasing sequence of integers satisfying 
\begin{equation}\label{eq:subexp}
\liminf_{n\to\infty}\frac{p_n}{n}=\infty\qquad\text{and}\qquad\limsup_{n\to\infty}\frac{\log p_n}{n}=0.
\end{equation}
Then there exists a zero entropy minimal subshift $(X,\sigma)$ satisfying 
$$ 
\liminf_{n\to\infty}\frac{P_X(n)}{p_n}=0 
$$ 
which supports two ergodic measures, $\mu$ and $\nu$, such that $(X,\sigma,\mu)$ is loosely Bernoulli while $(X,\sigma,\nu)$ is not loosely Bernoulli.
\end{theorem}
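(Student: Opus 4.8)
The plan is to realize $(X,\sigma)$ as the orbit closure of an inductively defined hierarchical (S-adic) construction over a fixed finite alphabet, in which two distinct families of building blocks are amalgamated into a single minimal system carrying two ergodic measures. At each stage $n$ I would maintain a block $A_n$ and a finite family $\mathcal{B}_n$ of blocks, all of a common height $h_n$, with stage-$(n+1)$ blocks obtained by concatenating stage-$n$ blocks so that $h_{n+1}=q_n h_n$. The block $A_{n+1}$ is built predominantly from copies of $A_n$ and the blocks of $\mathcal{B}_{n+1}$ predominantly from the blocks of $\mathcal{B}_n$, but into each I would insert a vanishing proportion of the other type. The measure $\mu$ is then the measure whose generic points are concatenations of asymptotically only $A$-blocks, and $\nu$ the analogous measure for the $\mathcal{B}$-blocks; these are distinct and ergodic because the limiting block frequencies differ. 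The sparse cross-insertions, recurring at every level with controlled gaps, force every block to occur syndetically in every point, which yields minimality.

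For the loosely Bernoulli side I would take the $A$-construction to be rank one, so that each $A_{n+1}$ is a prescribed number of copies of $A_n$ separated by a fixed spacer pattern and the associated measure $\mu$ is (measure-theoretically) rank one. By~\cite{ORW}, finite rank zero entropy systems are loosely Bernoulli, so $(X,\sigma,\mu)$ is loosely Bernoulli. For the non-loosely Bernoulli side I would use Feldman's permutation mechanism: at stage $n$ fix a collection of $M_n$ mutually $\overline{f}$-separated words and form the blocks of $\mathcal{B}_{n+1}$ by concatenating these according to distinct orderings, so that two blocks arising from sufficiently different orderings cannot be matched under $\overline{f}$ beyond a fixed deficiency $\varepsilon_0>0$. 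Propagating this separation through the hierarchy, $\nu$-almost every pair of generic points has orbit names that remain $\overline{f}$-separated by $\varepsilon_0$ at arbitrarily large scales, which by the $\overline{f}$-characterization of~\cite{feldman, ORW} precludes $(X,\sigma,\nu)$ from being loosely Bernoulli. The $A$-insertions must be taken of density $o(1)$ so that they cannot be exploited to improve the $\overline{f}$-matching and hence do not destroy the separation.

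The complexity is controlled only along the subsequence of heights $n=h_k$, which suffices since the statement asks for $\liminf_n P_X(n)/p_n=0$. A window of length $h_k$ meets at most two consecutive stage-$k$ blocks, so $P_X(h_k)$ is bounded by $h_k$ times the square of the number of distinct stage-$k$ blocks, a quantity governed by the counts $M_k$ and $q_k$. I would choose $M_k$ and $q_k$ growing slowly enough that $P_X(h_k)/p_{h_k}\to 0$, using the superlinearity $\liminf_n p_n/n=\infty$ to obtain room beyond linear complexity and the subexponential bound $\limsup_n (\log p_n)/n=0$ to accommodate the slowly growing block counts the $\mathcal{B}$-construction requires. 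Zero entropy follows from the subexponential growth of $P_X$.

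The main obstacle is the tension between the two requirements on the $\mathcal{B}$-family. Ferenczi's theorem~\cite{Fer} shows non-loosely Bernoulli behavior cannot occur below linear complexity, so the $\mathcal{B}$-construction is forced to produce super-linearly many words; yet to meet the complexity budget these words must be produced subexponentially slowly. The heart of the proof is therefore a quantitative Feldman-type estimate showing that a fixed $\overline{f}$-separation $\varepsilon_0$ can be maintained across all scales while the number of distinct blocks grows only subexponentially, together with the verification that the sparse cross-insertions needed for minimality neither push $P_X(h_k)$ past $p_{h_k}$ nor allow the non-loosely Bernoulli obstruction to be matched away. Establishing this uniform $\overline{f}$ lower bound through the hierarchy and reconciling it with the low-complexity constraint is the main technical work; by comparison, the rank one verification for $\mu$, the ergodicity and distinctness of $\mu$ and $\nu$, and the minimality argument are routine.
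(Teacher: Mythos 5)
Your architecture is essentially the paper's own (the paper's words $c_k$, built as $c_{k+1}=c_k^{(L_{k+1}/L_k)-n_k}b_{k,1}\cdots b_{k,n_k}$, are exactly your rank-one $A$-family, the $b_{k,i}$ are your Feldman family, and the mutual low-density insertions give minimality), but your non-loosely-Bernoulli step has a genuine gap. First, the claim that ``$\nu$-almost every pair of generic points has orbit names that remain $\overline f$-separated by $\varepsilon_0$ at arbitrarily large scales'' is not even true as stated: two $\nu$-generic points can track the \emph{same} stage-$k$ block $b_{k,m}$, in which case their $L_k$-names are $\overline f$-close; the Feldman estimate separates only names tracking \emph{different} types $m\neq j$. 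Second, even the corrected statement (separation at infinitely many pair-dependent scales, $(\nu\times\nu)$-a.e.) would not contradict Definition~\ref{def:looselyBernoulli}, since full measure of a limsup of events is compatible with the measure of the well-matched pairs at each fixed scale being large; the LB definition must be refuted scale by scale. What is missing is the measure-distribution input that makes the refutation work: that $\nu$ concentrates on the Feldman-type blocks rather than on the inserted $A$-material (Proposition~\ref{p:notmanycs}), and that \emph{every} ergodic measure gives asymptotically equal mass to each of the $n_k$ sets $[[b_{k,m}]]$ (Proposition~\ref{prop:asequal}), so that with $n_k\to\infty$ each type carries mass below $\varepsilon$ and any word family $W$ with $\nu([W])>1-\varepsilon$ must contain words from two distinct types, which are then $\overline f$-far. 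Proposition~\ref{prop:asequal} is a nontrivial counting argument over concatenations of stage-$(k+1)$ words; your proposal contains no substitute for it, and without it nothing prevents a hypothetical LB witness set $W$ from consisting of subwords of a single $b_{k,m}^{\,r}$, which are pairwise close.

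There is also a quantifier problem in your complexity step. You propose to ``choose $M_k$ and $q_k$ growing slowly enough that $P_X(h_k)/p_{h_k}\to 0$,'' but since $\liminf p_n/n=\infty$ permits $p_n/n$ to diverge arbitrarily slowly (e.g.\ like a triple logarithm), no a priori slow growth of the block counts makes your bound $h_kN_k^2$ fall below $p_{h_k}/k$: the factor $N_k^2$ must be beaten by $p_{h_k}/h_k$, which you cannot control without ordering the choices correctly. The paper's resolution is to freeze the stage-$k$ data first, so that the linear coefficient $(3n_k+6)$ is fixed, and only then choose $n_{k+1}$ so large that at the intermediate scale $m=L_kn_k^{2n_{k+1}+2}$ one has $p_m>k(6+3n_k)m$; at that scale every window sees long powers of a single stage-$k$ block, so $P_X(m)\le m(3n_k+6)$ with no dependence on the stage-$(k+1)$ count. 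Your scheme could be repaired in the same spirit (fix $N_k$, then inflate the height before passing to the next stage, noting that the Feldman separation of Lemma~\ref{p:feldman} is stable under powers), but as written the step fails for slowly superlinear $p_n$. Two lesser omissions: differing block frequencies give distinctness but not ergodicity of $\mu$ and $\nu$ (the paper extracts ergodic measures via the ergodic-decomposition argument of Lemma~\ref{lem:ergodic-large-measure}), and the robustness of the $\overline f$-separation to the cross-insertions is not automatic but quantitative — the paper controls insertions to proportion at most $1/8$ and degrades the bound from $7/8$ to $5/8$ via Lemma~\ref{l:gerber} and condition~\eqref{e:n_k}.
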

Our construction builds on Feldman's classical example from~\cite{feldman}, but requires significant modification 
in order to both find a loosely Bernoulli system supported on the subshift and to control its block complexity.

By Ferenczi's result (see the version in the Appendix), it  follows immediately that the complexity of the subshift $(X,\sigma)$ that we construct is constrained.  Namely, any subshift satisfying the conclusions of Theorem~\ref{thm:main} satisfies the superlinear growth condition  
$$ 
\liminf_{n\to\infty}\frac{P_X(n)}{n}=\infty.
$$ 

\subsection{Guide to the paper}
In Section~\ref{sec:background}, we give a short summary of the definitions and background results 
and in Section~\ref{sec:construction} we build the system $(X, \sigma)$ used to prove Theorem~\ref{thm:main}.  
Sections~\ref{sec:nlb} and~\ref{sec:lbb} are devoted to proving the existence of the measures $\mu$ and $\nu$ such that the systems $(X,\sigma,\mu)$ and  $(X,\sigma,\nu)$ are Kakutani inequivalent.  All
the results are sewn together in Section~\ref{sec:proofofmain} to prove 
Theorem~\ref{thm:main}.  In the Appendix, we review Ferenczi's result showing that linear complexity implies finite rank.

\section{Background}
\label{sec:background}

\subsection{Symbolic systems}
\label{subsec:symb}
Let $\CA$ be a finite alphabet, and denote $x\in\CA^\Z$ as $x =
(x_n)_{n\in\Z}$.
We endow  $\CA^\Z$ with the topology induced by the metric $d(x, y) = 1/2^k$ where 
$k = \inf\{|i|\colon x_i \neq y_i\}$.
The {\em left shift} $\sigma \colon \CA^\Z\to\CA^\Z$ is defined by
$(\sigma x)_n = x_{n+1}$ for all $n\in\Z$.
If $X\subset\CA^\Z$ is closed and $\sigma$-invariant, then $(X, \sigma)$
is a {\em subshift}.

The set $\CA^n$ consists of all words of length $n$ and we denote 
$w\in\CA^{n}$ by $w_0w_1 \ldots w_{n-1}  =  w_{[0,n-1]}$.  Define the {\em cylinder set
determined by $w$} to be the set
$$ [w]  =  \{x\in X\colon x_j = w_j \text{ for } j=0, \ldots, n-1\}.
$$

\begin{definition}\label{def:double-bracket}
Given $\omega\in\mathcal A^n$, we define $[[\omega]]$ to be the union of cylinder sets associated with all words of length $n$ that appear in the $2n$-length block $\omega\omega$. 
\end{definition}
If $F=\{u_1,u_2,\ldots,u_{\vert F\vert}\}$ is a collection of words $u_i$ (possibly of varying lengths), we similarly define 
$$
[F]=\bigcup_{i=1}^{\vert F\vert}[u_i]\qquad\text{and}\qquad[[F]]=\bigcup_{i=1}^{\vert F\vert}[[u_i]].
$$

Associated to the subshift $(X, \sigma)$ is the set of all shift-invariant probability measures, $\mathcal{M}(X,\sigma)$, 
defined on the Borel $\sigma$-algebra generated by the cylinder sets.  We denote 
the subset of ergodic measures by $\mathcal{M}_{e}(X,\sigma)$.  Standard results from topological dynamics 
tell us that both of these sets are nonempty.

\subsection{Complexity}
If $(X, \sigma)$ is a subshift and $n\in\N$, the {\em words $\mathcal
L_n(X)$  of length $n$}
are defined to be the collection of all $w\in\CA^n$ such that
$[w]\neq\emptyset$.  We denote the length of a word $w$ by $|w|$. 
The {\em language $\mathcal L(X)$} of the subshift $X$ is the union of all
its words:
$$\mathcal L(X) = \bigcup_{n=1}^\infty \mathcal L_n(X). $$
If $w\in\mathcal L(X)$ is a word, we say that {\em  $u\in\mathcal L(X)$
is a subword of $w$} if $w = w_1uw_2$ for some (possibly empty) words
$w_1, w_2\in\mathcal L(X)$.

For a subshift $(X, \sigma)$, the {\em word complexity $P_X\colon
\N\to\N$} is defined to be the number of words of length $n$ in the
language:
$$
P_X(n) = |\mathcal L_n(X)|.
$$
Thus $P_X(1)$ is the size of the alphabet, meaning that $P_X(1) = |\CA|$.

We say that $(X, \sigma)$ has {\em linear complexity} if
$$\liminf_{n\to\infty}P_X(n)/n < \infty.$$
One can also consider the stronger condition, $\limsup_{n\to\infty}P_X(n)/n < \infty$.  This is a distinct 
condition from the above, as there exist systems satisfying the first condition but not
satisfying the second (see~\cite{DDMP}, Example 4.1).  In this paper we restrict our attention to the $\liminf$ condition.

\subsection{Loosely Bernoulli}
\label{sec:loose}
Let $x, y\in\mathcal L(A)$ be finite words, written  
$x=x_1x_2\ldots x_m$ and $y=y_1y_2\ldots y_n$. We define a {\em match} between $x$ and $y$   
to be an order preserving bijection $\pi\colon \mathcal I^{\pi}_x\rightarrow
I^{\pi}_y$ where $\mathcal I^{\pi}_x\subset\{1,\ldots,m\}$ and 
$\mathcal I^{\pi}_{y}\subset\{1,\ldots,n\}$ with the property that $x_i=y_{\pi(i)}$.  We then 
say that the indices $i$ and $j=\pi(i)$ are {\em matched}. 
The
{\em size} of the match is defined to be
\begin{equation*}
\vert\pi\vert=\vert\mathcal I^\pi_x\vert+\vert\mathcal I^\pi_y\vert,
\end{equation*}
and the
 {\em best fit} between the two words $x$ and $y$ is
\begin{equation*}
\overline f^c(x,y)=\frac{\max\{\vert\pi\vert\colon
\text{$\pi$ is a match between $x$ and $y$}\}}{m+n}.
\end{equation*}
We can then define  the $\overline f$-distance between $x$ and $y$ to be
\begin{equation*}
\overline f(x,y)=1-  \overline f^c(x,y) = 1-\frac{\max\{\vert\pi\vert\colon \text{$\pi$ is a
match between $x$ and $y$}\}}{m+n}.
\end{equation*}
This distance measures the proportion of letters  such that, once they are deleted,  the remaining words are identical.
See for example ~\cite{feldman} or ~\cite{ORW} for more details and properties of this metric.

Let $\mathcal{B}$ be the Borel $\sigma$-algebra generated by the cylinder sets
and $\mu \in\mathcal{M}(X,\sigma)$.   
Then $(X,\mathcal{B}, \mu)$ is a Lebesgue probability space: together with $\sigma$ 
it is a measurable dynamical system.  We abbreviate this as 
$(X, \sigma, \mu)$. 

Now assume that $\mu\in \mathcal{M}_{e}(X,\sigma)$ is such that $(X, \sigma, \mu)$ is zero entropy.
In this case, we can define loosely Bernoulli as follows:
\begin{definition}\label{def:looselyBernoulli}
The zero-entropy ergodic subshift $(X, \sigma, \mu)$ is  {\em loosely Bernoulli} if for all
$\varepsilon>0$,
there exists $N>0$ such that for all $n\geq N$, there exists
$W\subset \mathcal L_n(X)$   with
\begin{itemize}
   \item $\mu(W)>1-\varepsilon$; and
   \item for any pair $\omega,\omega'\in W$, $\overline f(\omega,\omega')<\varepsilon$.
\end{itemize}
\end{definition}

\section{The Construction}\label{sec:construction}
\subsection{Feldman words}
The core symbolic structure of the subshift we build is closely related to the first example of a zero entropy non-loosely Bernoulli system that was given by Feldman ~\cite{feldman}.  We begin by describing a slight modification of his example, where changes are introduced to accommodate the additional requirements our subshift must satisfy.

Let $\{n_k\}$ be an increasing sequence of integers with $n_0\ge 2$.  We inductively define sets of words of increasing sizes.     Define the $0$th stage alphabet
\begin{eqnarray*} 
\mathcal{A}_0&:=&\{a_{0,1},a_{0,2},\dots,a_{0,n_0}\}. 
\end{eqnarray*}
For stage one of the construction we define words
\begin{eqnarray*}
\mathcal{A}_1&:=&\{a_{1,1},a_{1,2},\dots, a_{1,n_1}\}
\end{eqnarray*}
 each of length $n_0^{4n_1+3}$ by setting 
$$ 
a_{1,i}:=(a_{0,1}^{n_0^{2(i+n_1)}}a_{0,2}^{n_0^{2(i+n_1)}}\dots a_{0,n_0}^{n_0^{2(i+n_1)}})^{n_0^{2(n_1-i+1)}}\quad\text{ for  }1\leq i\leq n_1.
$$ 
For $k\ge1$, given a set of $n_k$ distinct words 
\begin{eqnarray*} 
\mathcal{A}_k&=&\{a_{k,1},a_{k,2},\dots, a_{k,n_k}\} 
\end{eqnarray*} 
each of length $\vert a_{k-1,1}\vert n_{k-1}^{4n_k+3}$, we define a set of $n_{k+1}$ new words 
\begin{eqnarray*} 
\mathcal{A}_{k+1}&=&\{a_{k+1,1},a_{k+1,2},\dots, a_{k+1,n_{k+1}}\}.
\end{eqnarray*} 
each of length $\vert a_{k,1}\vert n_{k}^{4n_{k+1}+3}$ by setting
$$ 
a_{k+1,i}=(a_{k,1}^{n_{k}^{2(i+n_{k+1})}}a_{k,2}^{n_{k}^{2(i+n_{k+1})}}\dots a_{k,n_k}^{n_{k}^{2(i+n_{k+1})}})^{n_{k}^{2(n_{k+1}-i+1)}} \quad \text{ for all }1\leq i\leq n_{k+1}. 
$$ 
In what follows we refer to the words $a_{n,k}$ as {\it Feldman words}.  Throughout our construction we introduce conditions on the growth rate of the sequence $\{n_k\}$ that guarantee that it grows rapidly enough for our subshift to have the necessary properties.

\subsection{Extended Feldman Words}
The sets of words used in our construction are extensions of the words described above.  We begin with a base-case alphabet of size $n_0+1$ to be $\mathcal A_0$ with an additional symbol $c_0$:
\begin{eqnarray*} 
\mathcal{B}_0&:=&\{a_{0,1},a_{0,2},\dots,a_{0,n_0}, c_0\}.
\end{eqnarray*}
We then define $n_1$ words of length $L_1 = 1+n_0^{4n_1+3}$ by  setting
$$ 
b_{1,i}:=(a_{0,1}^{n_0^{2(i+n_1)}}a_{0,2}^{n_0^{2(i+n_1)}}\dots a_{0,n_0}^{n_0^{2(i+n_1)}})^{(n_0)^{2(n_1-i+1)}}c_0 \quad\text{ for }1\leq i\leq n_1.
$$ 
In addition, we define a new word, also of length $L_1$, but with a different combinatorial structure: 
$$ 
c_1:=c_0^{L_1-n_0}a_{0,1}a_{0,2}\dots a_{0,n_0}.
$$ 
Finally, we denote the collection of these stage one words  by
\begin{eqnarray*}
\mathcal{B}_1&:=&\{b_{1,1},b_{1,2},\dots, b_{1,n_1},c_1\} 
\end{eqnarray*}
Note that that every letter in $\mathcal{B}_0$ appears at least once in every word in $\mathcal{B}_1$, similar to how every letter in 
$\mathcal{A}_0$ appears at least once in every Feldman word in $\mathcal{A}_1$.

We now proceed inductively.  Fix $k\geq1$ and suppose we are given the set 
\begin{eqnarray*} 
\mathcal{B}_k&=&\{b_{k,1},b_{k,2},\dots, b_{k,n_k},c_k\}
\end{eqnarray*} 
comprised of $n_k+1$ many words, all of length $L_k=(1+n_{k-1}^{4n_{k}+3}) L_{k-1}$, written in the stage-$(k-1)$ words of
 $\mathcal{B}_{k-1}$, meaning each word in $\mathcal{B}_k$ is a concatenation of words from $\mathcal{B}_{k-1}$.  
 We then  define $n_{k+1}$ words,	
all of which have length 
	\begin{equation}
	\label{eq:lengthL}
	L_{k+1}=\left(1+n_{k}^{4n_{k+1}+3}\right) L_k,
	\end{equation}
by setting
\begin{equation}\label{e:setofwords}
b_{k+1,i}=(b_{k,1}^{n_{k}^{2(i+n_{k+1})}} b_{k,2}^{n_{k}^{2(i+n_{k+1})}}\dots b_{k,{n_{k}}}^{n_{k}^{2(i+n_{k+1})}})^{n_{k}^{2(n_{k+1}-i+1)}}c_k.
\end{equation} 
We define a new word, also of length $L_{k+1}$ but with a combinatorial structure similar to the words $c_k$ constructed at previous levels,
by setting 
\begin{equation}\label{e:extraword}
c_{k+1}:=c_k^{(L_{k+1}/L_k)-n_k}b_{k,1}b_{k,2}\dots b_{k,n_{k}}.
\end{equation} 
Finally, we define the collection of $(k+1)$-words to be 
\begin{eqnarray*}
\mathcal{B}_{k+1}&:=&\{b_{k+1,1},b_{k+1,2},\dots,b_{k+1,n_{k+1}},c_{k+1}\}.
\end{eqnarray*}
Note that every element of 
$\mathcal{B}_k$ appears at least once in each element of $\mathcal{B}_{k+1}$ (and similarly with $\mathcal{A}_k$ and $\mathcal{A}_{k+1}$).
It also follows  that
\begin{equation}\label{e:LkLkplus1}
\lim_{k\to\infty}L_k = \infty \,\,\, {\rm{ and }} \,\,\,\frac{L_k}{L_{k+1}}<\frac1{n_k}.
\end{equation} 

In what follows we often distinguish between words of the type $b_{k,i}$ and the word $c_k$.   As the words $b_{k,i}$ are similar in form to the Feldman words, we refer to them as  {\it extended Feldman words}, and we denote the set of extended Feldman words at stage $k$ by 
$$\mathcal B_k^{ F} =\{\omega\in\mathcal B_k\colon \omega\neq c_k\}= {\mathcal B_k} \backslash \{c_k\}.$$

 \subsection{Constructing the subshift} \label{sbsct_subshift}
We use the extended Feldman words to construct the subshift $X$. 
We emphasize that for the construction and the properties we prove about the system, 
all of the results only rely on choosing the sequence $\{n_k\}$ with sufficiently rapid growth.  
As they become necessary, we introduce new growth conditions on this sequence, clarifying when each new 
condition is needed.  For the preliminary properties and construction of the space $X$, we only 
require that the sequence satisfies $n_0 \ge 2$ and $n_k\rightarrow\infty$. 
 
Given such a sequence, by the inductive procedure described above we obtain, for each $k\geq1$, an set $\mathcal{B}_k$ comprised of words of equal length, each of which is a concatenation of words from $\mathcal{B}_{k-1}$, and every word in $\mathcal{B}_{k-1}$ appears at least once in every word in $\mathcal{B}_k$.  
Moreover note that, by construction, $b_{k,1}$ is the leftmost subword of length $L_k$ in the word $b_{k+1,1}$ for all $k\geq0$.  Therefore we can define a one-sided infinite word $b_{\infty,1}$ as the unique word whose leftmost subword of length $L_k$ is $b_{k,1}$ for all $k\geq 1$.

We choose a new symbol, denoted $*$, that was not included in $\mathcal{B}_0$ and 
we define a $\{b_{1,1},b_{1,2},\dots, b_{1,n_1},c_1,*\}$-coloring of $\mathbb{Z}$ by coloring $\N$ with the word $b_{\infty,1}$ and coloring the set $\{\ldots,-2,-1,0\}$ with $*$; call this $\mathbb{Z}$-coloring $\alpha$.  Finally let $X$ be the space:
\begin{equation}
\label{def:X}
\left\{x\in\{b_{1,1},b_{1,2},\dots, b_{1,n_1},c_1,*\}^{\Z}\colon\forall i>0\text{, }\exists j>i \text{ such that }d(x,\sigma^j\alpha)<2^{-i}\right\}, 
\end{equation}
where $\sigma$ and $d$ are the left shift and the metric as defined in Section~\ref{subsec:symb}.
Note that $*$ does not appear in any $\Z$-coloring that can be obtained by taking larger and larger left-shifts of $\alpha$, and so the elements of $X$ are actually written in the letters $\mathcal{B}_0$.  
Thus we make no abuse of notation by referring  to $X$ as a subshift of $\mathcal{B}_0^{\Z}$.  

\begin{prop}\label{p:minimal}
The system $(X,\sigma)$, where $X$ is the space defined in~\eqref{def:X} and $\sigma\colon X\to X$ is the left shift, is minimal. 
\end{prop}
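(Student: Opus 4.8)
The plan is to verify the standard criterion that a nonempty subshift is minimal if and only if it is \emph{uniformly recurrent}: for every $w\in\mathcal L(X)$ there is an $R=R(w)$ such that $w$ occurs as a subword of every word of length $R$ in $\mathcal L(X)$. Since $X$ is exactly the set of limit points of the forward shifts of $\alpha$, it is nonempty, closed and $\sigma$-invariant, so once uniform recurrence is established minimality follows. First I would pin down the language. If $x\in X$, then for each $i$ there is some $j>i$ with $d(x,\sigma^j\alpha)<2^{-i}$, which forces $x_{[-i,i]}=(\sigma^j\alpha)_{[-i,i]}$; as $j>i$, these coordinates of $\sigma^j\alpha$ lie in the positive part of $\alpha$, where $\alpha$ agrees with $b_{\infty,1}$. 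Hence every finite subword of every $x\in X$ is a subword of $b_{\infty,1}$, so the elements of $\mathcal L(X)$ are precisely subwords of $b_{\infty,1}$. I would also record the nested block structure: for each fixed $k$, the word $b_{\infty,1}$ is an infinite concatenation of words from $\mathcal B_k$ (its length-$L_k$ prefix is $b_{k,1}$, its length-$L_{k+1}$ prefix is $b_{k+1,1}$, and these cuttings are compatible), and by the stated containment that every element of $\mathcal B_k$ appears in each element of $\mathcal B_{k+1}$, together with transitivity, every word of $\mathcal B_k$ occurs in every word of $\mathcal B_m$ for all $m>k$.

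Now fix $w\in\mathcal L(X)$ and choose $k$ with $L_k\ge |w|$. Reading $b_{\infty,1}$ along its level-$k$ cutting, $w$ lies inside a block $yz$ formed by two consecutive level-$k$ words $y,z\in\mathcal B_k$. The heart of the argument is to show that every word of $\mathcal B_{k+2}$ contains $w$. For this I would enumerate the finitely many admissible level-$k$ adjacencies $(y,z)$ that can occur in $b_{\infty,1}$: from \eqref{e:setofwords} the interior of each $b_{k+1,i}$ realizes the adjacencies $(b_{k,j},b_{k,j})$, $(b_{k,j},b_{k,j+1})$, $(b_{k,n_k},b_{k,1})$ and $(b_{k,n_k},c_k)$, while from \eqref{e:extraword} the interior of $c_{k+1}$ realizes $(c_k,c_k)$ and $(c_k,b_{k,1})$. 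Checking the leading and trailing level-$k$ words of each type of level-$(k+1)$ word (each $b_{k+1,i}$ begins with $b_{k,1}$ and ends with $c_k$, while $c_{k+1}$ begins with $c_k$ and ends with $b_{k,n_k}$) shows that every boundary adjacency between two consecutive level-$(k+1)$ words already lies among these same pairs, so the list is exhaustive. Each admissible adjacency therefore appears \emph{inside} some element of $\mathcal B_{k+1}$ (either $c_{k+1}$ or some $b_{k+1,i}$). Since each element of $\mathcal B_{k+2}$ is, by construction, a concatenation in which every element of $\mathcal B_{k+1}$ occurs as one of the aligned level-$(k+1)$ blocks, the adjacency $yz$ — and hence $w$ — occurs inside every word of $\mathcal B_{k+2}$.

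Finally I would convert this into a gap bound. Any subword of any $x\in X$ of length $R:=3L_{k+2}$ is, by the first paragraph, a length-$R$ window of $b_{\infty,1}$, and such a window necessarily contains a complete level-$(k+2)$ cell of the cutting; that cell is an element of $\mathcal B_{k+2}$ and so contains $w$. Thus $w$ recurs in every $x\in X$ with gaps at most $R$, which is the desired uniform recurrence and yields minimality. I expect the main obstacle to be the bookkeeping in the middle step: confirming that the enumerated list of admissible level-$k$ adjacencies is genuinely complete, i.e.\ that no new adjacency is created at the junctions between consecutive words of $\mathcal B_{k+1}$. This relies on identifying the first and last level-$k$ words of each kind of level-$(k+1)$ word and on the growth of $\{n_k\}$, which guarantees that the repetition exponents in \eqref{e:setofwords} and the length of the $c_k$-run in \eqref{e:extraword} are at least $2$, so that the adjacencies $(b_{k,j},b_{k,j})$ and $(c_k,c_k)$ are actually realized.
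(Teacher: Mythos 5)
Your proof is correct, but it takes a more laborious route than the paper at the key step. The paper sidesteps your adjacency analysis entirely with a prefix trick: given $u\in\mathcal L(X)$, since $u$ occurs somewhere in $b_{\infty,1}$ and $b_{n,1}$ is the length-$L_n$ prefix of $b_{\infty,1}$ with $L_n\to\infty$, one can choose $n$ so large that $u$ already sits inside $b_{n,1}$ itself; then every word of $\mathcal B_{n+1}$ contains $b_{n,1}$, and any window of length $2L_{n+1}$ in $b_{\infty,1}$ contains a complete $\mathcal B_{n+1}$-block, hence $u$. You instead choose $k$ by $|w|\le L_k$, so that $w$ spans at most two consecutive level-$k$ cells, and then must classify all admissible level-$k$ adjacencies and verify the list is closed under junctions of consecutive level-$(k+1)$ words; your enumeration is complete (the four junction types reduce to pairs already realized internally), and the repetition exponents you worry about at the end are automatic from the construction — $n_k^{2(n_{k+1}-i+1)}\ge n_k^2\ge 4$ and $L_{k+1}/L_k-n_k=1+n_k^{4n_{k+1}+3}-n_k\ge 2$ — so no growth hypothesis beyond $n_0\ge 2$ is actually used. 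What your extra work buys is a quantitative refinement: your syndeticity gap $3L_{k+2}$ (in fact $2L_{k+2}$ suffices, as in the paper's final step) is controlled by $|w|$ alone, whereas the paper's gap $2L_{n+1}$ depends on where $u$ first occurs in $b_{\infty,1}$, which may force $n$ much larger than the minimal $k$ with $L_k\ge|u|$. The cost is precisely the bookkeeping you flagged; it checks out, but the paper's observation that one may take the occurrence of $u$ inside the aligned prefix $b_{n,1}$ makes the whole middle step unnecessary if one does not care about the size of the gap.
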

\begin{proof}
Let $u\in\mathcal{L}(X)$.  Then by the construction of $X$, $u$ occurs as a subword of $b_{\infty,1}$ and therefore there exists some $n$ such that $u$ occurs as a subword of $b_{n,1}$.  Note that every word in $\mathcal{B}_{n+1}$ contains $b_{n,1}$ as a subword (in fact every word in $\mathcal{B}_{n+1}$ contains every word in $\mathcal{B}_n$ as a subword).  We also have that for all $k>1$, $b_{n+k,1}\in \mathcal{B}_{n+k}$
can be written as a concatenation of words in $\mathcal{B}_{n+1}$.  Thus any subword of length at least $2 L_{n+1}$ in $b_{n+k,1}$ has 
$u$ as a subword.  Therefore any subword of length at least $2 L_{n+1}$ in $b_{\infty,1}$ has $u$ as a subword.  We conclude that if $v\in\mathcal{L}(X)$ is a word satisfying $|v|\geq 2 L_{n+1}$ then $u$ occurs as a subword of $v$  This means that $u$ occurs syndentically in every element of the subshift $X$ and the maximum gap between consecutive occurrences of $u$ is at most $2 L_{n+1}$.

Since $u$ was arbitrary, we conclude that every word in $\mathcal{L}(X)$ occurs syndetically in every element of $X$ and, for any fixed word in $\mathcal{L}(X)$, the maximum gap between consecutive occurrences is uniform throughout $X$ (but may depend on the word itself).  Therefore $X$ is minimal.
\end{proof}

\section{Non-loosely Bernoulli}
\label{sec:nlb}
\subsection{Overview of the existence of a non-loosely Bernoulli measure}
The goal of this section is to show 
that if the sequence $\{n_k\}$ used in constructing the space $X$ grows sufficiently rapidly, 
then there is a non-loosely Bernoulli measurable system supported on $X$.
For ease of exposition, in what follows, we assume that the sequence grows sufficiently rapidly and we defer defining the explicit growth condition to later in the section where we provide proofs of the key results.

\begin{theorem}\label{t:nlb}
If  the sequence $\{n_k\}$ used in constructing the space $X$ grows sufficiently rapidly, 
then there exists an ergodic measure $\nu\in \mathcal M_e(X)$ 
such that the system $(X,\sigma, \nu)$ is not loosely Bernoulli.
\end{theorem}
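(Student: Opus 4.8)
The plan is to construct the measure $\nu$ as the unique invariant measure (or an ergodic component) supported on the subsystem generated by the Feldman-like structure, and then to show directly from the definition of loosely Bernoulli that it fails the $\overline f$-clustering property. The guiding principle behind Feldman's original example is that the hierarchical structure of the words $b_{k,i}$ — in which each extended Feldman word at level $k+1$ is built from long runs $b_{k,j}^{n_k^{2(i+n_{k+1})}}$ whose exponents vary with the index $i$ — forces words of the same length to look combinatorially very different depending on which level-$k$ building block dominates them. The $\overline f$ metric cannot be made small across such words because matching long runs of $b_{k,j}$ in one word against a word assembled with a different distribution of blocks requires deleting a definite proportion of symbols.

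\textbf{First} I would identify the measure. Since the construction is essentially a cutting-and-stacking / substitution hierarchy, I expect $\nu$ to arise as a weak* limit of uniform measures on the orbits of the words $b_{k,i}$; concretely, one fixes a rule for how often each $b_{k,i}$ is selected at each stage and takes the associated generic point. The key is to arrange (by choosing the selection frequencies, and using the rapid growth of $\{n_k\}$) that $\nu$ is carried by the extended Feldman words $\mathcal B_k^F$ rather than by the $c_k$'s, so that $\nu$-typical length-$L_k$ words are genuine Feldman words. One must verify ergodicity, which follows from unique ergodicity of the relevant subsystem or from a standard argument that the frequency of every block converges $\nu$-almost everywhere.

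\textbf{Next}, and this is the heart of the matter, I would establish the $\overline f$-separation. The strategy is to show there is a fixed $\varepsilon_0>0$ such that for infinitely many $n$ (namely $n$ near each $L_k$), one \emph{cannot} find a set $W$ of $\nu$-measure greater than $1-\varepsilon_0$ whose words are pairwise $\overline f$-close. I would do this by exhibiting, within a set of nearly full $\nu$-measure, two families of length-$L_{k+1}$ words built from $b_{k,i}$-blocks with markedly different index profiles, and then prove a lower bound $\overline f(\omega,\omega')\geq \varepsilon_0$ for $\omega,\omega'$ drawn from the two families. The combinatorial core is a counting/matching estimate: an order-preserving match between two strings of repeated blocks $b_{k,j}^{p}$ and $b_{k,\ell}^{q}$ with $j\neq\ell$ can capture only a small fraction of symbols, because the $b_{k,j}$ themselves are pairwise $\overline f$-far by the inductive hypothesis, and an order-preserving match must respect the block ordering $a_{k,1},\dots,a_{k,n_k}$ internal to each Feldman word. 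This is where the self-similar design pays off: the $\overline f$-distance between distinct Feldman words stays bounded below uniformly across levels.

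\textbf{The main obstacle} will be controlling the matching estimate rigorously, since $\overline f$ permits matches that skip symbols arbitrarily and need not respect the block decomposition — a clever match could in principle align $a_{k,m}$-runs across block boundaries. I expect to handle this by an inductive lemma asserting a uniform lower bound $\overline f(b_{k,i},b_{k,j})\geq \delta$ for $i\neq j$ (and more generally for concatenations with differing index profiles), propagated from level $k$ to level $k+1$ using the disparity in the exponents $n_k^{2(i+n_{k+1})}$; the rapid growth condition on $\{n_k\}$ is exactly what guarantees that any economical match is dominated by the mismatch in run-lengths. Pinning down the explicit growth rate on $\{n_k\}$ needed to make this induction close, and verifying that the deferred growth condition is compatible with the complexity bound $\liminf P_X(n)/p_n=0$, is the delicate bookkeeping I would defer until the quantitative lemmas are in place.
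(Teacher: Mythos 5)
Your separation half is essentially the paper's: an inductive lower bound on $\overline f(b_{k,i}^r,b_{k,j}^s)$ for $i\neq j$, propagated from level to level using the disparity of the exponents $n_k^{2(i+n_{k+1})}$ and the rapid growth of $\{n_k\}$, is exactly Lemma~\ref{p:feldman} and Proposition~\ref{p:asbadmatch}. One quantitative point you should flag now rather than defer: a $\nu$-typical word of length $L_k$ is a \emph{window} into $b_{k,m}b_{k,m}$ at an arbitrary offset, not an aligned block, so to derive a contradiction you must extend a match between two windows to a match between $b_{k,m}^2$ and $b_{k,j}^2$, losing a factor of $2$. Hence a separation constant $\varepsilon_0\le 1/2$ is useless; you need the bound strictly above $1/2$ (the paper gets $5/8$, via $7/8$ for the pure Feldman words minus a $1/4$ loss from the $c$-symbols, using Lemma~\ref{l:gerber}).

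The genuine gap is in the measure-theoretic half, in two linked places. First, ergodicity: there is no ``relevant subsystem'' to be uniquely ergodic --- $X$ is minimal, so the closed invariant set generated by the Feldman structure is $X$ itself, and $X$ by design supports at least two ergodic measures, so unique ergodicity fails and generic-point frequency arguments do not by themselves yield ergodicity. The paper instead takes a (possibly non-ergodic) weak* limit $\mu$ of empirical measures along a point with prefixes $b_{k,1}$, proves $\mu([[\mathcal B_m^F]])>1-4^{-m}$ for every $m$, and then extracts an ergodic component $\nu$ with $\nu([[\mathcal B_m^F]])>1-2^{-m}$ \emph{simultaneously for all $m$} via the ergodic decomposition (Lemma~\ref{lem:ergodic-large-measure}); your parenthetical ``or an ergodic component'' gestures at this but the uniform-in-$m$ bookkeeping is the actual content. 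Second, and more seriously: nothing in your plan forces a set $W$ of $\nu$-measure $>1-\varepsilon_0$ to contain words from \emph{two} families with different index profiles. Your idea of fixing selection frequencies at each stage does not survive the passage to an ergodic component, which can skew block frequencies arbitrarily; a priori $\nu$ could concentrate on a single $[[b_{k,m}]]$ at each level $k$, and words within one $[[b_{k,m}]]$ (shifted windows of the same highly repetitive word) are not $\overline f$-separated, so no contradiction would result. The paper closes this hole with Proposition~\ref{prop:asequal}: \emph{every} ergodic measure on $X$ assigns nearly equal mass to all $n_k$ sets $[[b_{k,m}]]$, because every point is a concatenation of $\mathcal B_{k+1}$-words in each of which all $b_{k,\ell}$ appear with essentially identical frequency. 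Since $n_k>1/\varepsilon$ sets of nearly equal measure cannot all be avoided by a full-measure set except one, $W$ must meet two distinct $[[b_{k,m}]]$, $[[b_{k,j}]]$, which is what feeds the separation lemma. Without an analogue of this equidistribution statement your argument does not close.
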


The proof follows from several propositions, which we now state, deferring their more technical proofs until after 
the proof of the theorem. 
The first proposition shows that extensions of different Feldman words from the same stage of the construction do not match well in the $\overline f$ metric:    
\begin{prop}\label{p:asbadmatch}
If the sequence $\{n_k\}$ grows sufficiently rapidly, then for all integers $r,s, k\ge0$, if $i\neq j$ then
\begin{equation}\label{e:badmatch}
\overline f(b_{k,i}^r,b_{k,j}^s)\ge \frac 58.
\end{equation}
\end{prop}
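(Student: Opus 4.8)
My plan is to argue by induction on $k$, carrying a slightly stronger quantitative estimate than the stated one. Concretely, I would prove that for $i\ne j$ and all $r,s\ge 0$,
$\overline f(b_{k,i}^{r},b_{k,j}^{s})\ge \beta_k$, where $\beta_k:=1-\sum_{l<k}c/n_l$ for a fixed absolute constant $c$; the stated bound $\frac{5}{8}$ then follows because the hypothesis that $\{n_k\}$ grows sufficiently rapidly is exactly what forces $\sum_l c/n_l\le \frac{3}{8}$, and hence $\beta_k\ge \frac{5}{8}$ for every $k$. The base case $k=0$ is immediate: the words $b_{0,i}=a_{0,i}$ are distinct single letters, so no letter of $a_{0,i}^{r}$ can be matched to any letter of $a_{0,j}^{s}$, giving $\overline f=1=\beta_0$.

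For the inductive step I assume $i<j$ and exploit two structural features. First, every word in $\mathcal B_{k}$ has the same length $L_k$ and is a concatenation of the equal-length level-$(k-1)$ words grouped into maximal runs: in $b_{k,i}$ each $b_{k-1,m}$ occurs in runs of length $\ell_i:=n_{k-1}^{2(i+n_k)}$, while in $b_{k,j}$ the runs have length $\ell_j:=n_{k-1}^{2(j+n_k)}$, so $\ell_j/\ell_i\ge n_{k-1}^{2}$. Second, within any window of $x:=b_{k,i}^{r}$ the blocks of a fixed type $m$ occupy only a $1/n_{k-1}$ fraction of the letters. These are the run-length discrepancy and the sparsity that drive the whole estimate.

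Given an optimal match $\pi$ between $x=b_{k,i}^{r}$ and $y:=b_{k,j}^{s}$, I would bound the number $M$ of matched pairs. Since $\pi$ is order preserving, the letters matched into a single $y$-run $R'$ (a power $b_{k-1,m'}^{\ell_j}$) have preimages filling a contiguous window $W'$ of $x$, and the windows attached to distinct $y$-runs have essentially disjoint interiors, so $\sum_{R'}|W'|\le |x|$. I then split the matches landing in $R'$ into same-type matches (to type-$m'$ blocks of $x$) and cross-type matches. The same-type matches are the source of the per-level leak: because type-$m'$ blocks fill only a $1/n_{k-1}$ fraction of $W'$, they contribute at most $|W'|/n_{k-1}$ pairs. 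The cross-type matches are exactly where the inductive hypothesis enters: reading along the runs of $x$, they decompose into sub-matches of $R'$ against powers $b_{k-1,m}^{\ast}$ with $m\ne m'$, each of $\overline f$-value at least $\beta_{k-1}$, so (using disjointness of the matched portions of $R'$) their total is at most $\frac{1-\beta_{k-1}}{2}\bigl(|R'|+|W'|\bigr)$.

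Summing over all $y$-runs gives $2M\le (1-\beta_{k-1})(|x|+|y|)+O\bigl((|x|+|y|)/n_{k-1}\bigr)$, whence $\overline f(x,y)\ge \beta_{k-1}-O(1/n_{k-1})=\beta_k$, which closes the induction and, under the growth condition, yields $\overline f\ge\frac{5}{8}$. I expect the main obstacle to be the order-preserving bookkeeping in the decomposition step: making precise that the $x$-windows are genuinely disjoint, that the cross-type contribution really reduces to the inductive hypothesis applied to \emph{powers} (this is precisely why the statement is phrased for arbitrary $r,s$ rather than for single words), and that the partial blocks and partial runs at window boundaries contribute only lower-order terms that can be folded into the $O(1/n_{k-1})$ leak. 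The reason for carrying the summable telescoping bound $\beta_k$ instead of a flat $\frac{5}{8}$ at each stage is exactly that this leak would otherwise erode the constant below $\frac{5}{8}$ over infinitely many levels; the rapid growth of $\{n_k\}$ is what keeps the accumulated loss under control.
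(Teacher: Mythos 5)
Your proposal is correct in substance but packages the argument differently from the paper. The paper splits the proof in two: it first proves, by essentially the induction you describe (windowing the short-run word by the long runs of the other, a same-type sparsity term of size $2/n_k$, and the inductive hypothesis applied to completed powers, with a telescoping quantity $\Gamma_k$ playing the role of your $1-\beta_k$), that the \emph{pure} Feldman words satisfy $\overline f(a_{k,i}^r,a_{k,j}^s)\ge\frac78$ (Lemma~\ref{p:feldman}); it then transfers this to the extended words via the deletion property of $\overline f$ (Lemma~\ref{l:gerber}): under the growth condition~\eqref{e:n_k} the $c$-symbols occupy at most a $\frac18$ proportion of $b_{k,i}$, so deleting them costs $2\rho=\frac14$ and lands exactly at $\frac58$. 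You instead run the induction directly on the extended words $b_{k,i}$, absorbing the $c$-blocks into the per-level leak $O(1/n_{k-1})$. This is legitimate and even quantitatively sharper (the $c_{k-1}$-blocks occupy only a $1/(1+n_{k-1}^{4n_k+3})$ fraction of each level-$k$ word, far less than $\frac18$), and it dispenses with Lemma~\ref{l:gerber} and condition~\eqref{e:n_k}; what it costs is extra bookkeeping inside the induction, which the paper's two-step structure deliberately keeps out of the combinatorial core.

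Two points in your sketch need care when written out. First, your description of $b_{k,i}$ as a pure concatenation of maximal runs of the $b_{k-1,m}$ omits the terminal $c_{k-1}$ in~\eqref{e:setofwords}; matched pairs with an endpoint in a $c_{k-1}$-block fall under neither your same-type nor your cross-type case (the inductive hypothesis says nothing about $\overline f(c_{k-1},b_{k-1,m}^p)$), so they must be discarded explicitly --- harmless, since such letters form a fraction at most $L_{k-1}/L_k<1/n_{k-1}$ of each word, but it has to be said. Second, the flat claim that type-$m'$ blocks fill only a $1/n_{k-1}$ fraction of \emph{any} window $W'$ fails for windows shorter than one cycle: a window lying inside a single type-$m'$ run of $x$ consists entirely of type-$m'$ letters. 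The correct bound carries an additive boundary term of up to two $x$-runs, i.e.\ $2\ell_i L_{k-1}$ letters per window, and it is precisely the run-length discrepancy $\ell_j\ge n_{k-1}^2\,\ell_i$ that makes the sum of these terms over the roughly $|y|/(\ell_j L_{k-1})$ windows of order $(|x|+|y|)/n_{k-1}^2$; the paper's completion trick (enlarging each window to whole cycles at multiplicative cost $1-2/n_k$) is the cleaner way to formalize exactly this. You flagged both issues as the main obstacles and both are repairable within your scheme, so the plan goes through.
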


The next two results establish necessary properties of ergodic measures of extended Feldman words. 
\begin{prop}\label{p:notmanycs}
If the sequence $\{n_k\}$ grows sufficiently rapidly, then there exists a 
measure $\nu\in \mathcal M_e(X)$ such that 
$$
\lim_{k\to\infty}\nu\left([[\mathcal B_k^{F}]]\right)=1.
$$
\end{prop}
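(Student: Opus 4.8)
\textbf{Proof proposal for Proposition~\ref{p:notmanycs}.}

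The plan is to construct the ergodic measure $\nu$ as a weak* limit of empirical distributions along the infinite word $b_{\infty,1}$, and then to show that the word $c_k$ occupies an asymptotically negligible fraction of each high-level word, so that the mass of $[[\mathcal B_k^F]]$ tends to $1$. First I would produce a candidate $\nu$: for each $N$, let $\mu_N = \frac1N\sum_{j=0}^{N-1}\delta_{\sigma^j \alpha}$ be the empirical measure along the orbit of the one-sided word $b_{\infty,1}$ (extended to $\Z$ by the construction). By compactness of $\mathcal M(X,\sigma)$ in the weak* topology, some subsequence converges to a shift-invariant measure; since $(X,\sigma)$ is minimal by Proposition~\ref{p:minimal} and the construction has a uniquely ergodic flavor from the nested hierarchical structure, I expect in fact $\mu_N$ itself to converge, and I would verify that the limit $\nu$ can be chosen ergodic (passing to an ergodic component if necessary, or appealing to unique ergodicity if the growth conditions force it). The key structural input is that the words are built by the nested substitution~\eqref{e:setofwords}--\eqref{e:extraword}, so the frequency of any subword stabilizes as we pass to higher stages.

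The heart of the argument is a counting estimate: I would bound the proportion of positions inside a stage-$(k+1)$ word that lie in a copy of the special word $c_k$ rather than in an extended Feldman word $b_{k,i}$. From~\eqref{e:setofwords}, each $b_{k+1,i}$ is a concatenation of the $b_{k,m}$'s (with large multiplicities $n_k^{2(i+n_{k+1})}$) together with exactly one trailing $c_k$; and from~\eqref{e:extraword}, the word $c_{k+1}$ is built mostly out of copies of $c_k$ but also contains one copy of each $b_{k,m}$. The crucial observation is that $c_k$ occupies only a $L_k/L_{k+1}$ fraction of each $b_{k+1,i}$, and by~\eqref{e:LkLkplus1} this is at most $1/n_k \to 0$. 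Iterating this down the hierarchy, the total proportion of positions lying in a copy of $c_j$ for some $j < k$ is controlled by a sum $\sum_{j} L_j/L_{j+1} \le \sum_j 1/n_j$, which I can make arbitrarily small by the rapid growth of $\{n_k\}$. Thus, apart from the negligible contribution of the $c_k$ blocks (and the even rarer $c_{k+1}, c_{k+2}, \dots$ blocks at higher levels), every length-$L_k$ window in a $\nu$-generic point is a shift of an extended Feldman word, which places it in $[[\mathcal B_k^F]]$.

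To convert this frequency estimate into the measure statement, I would translate ``proportion of positions'' into $\nu$-measure using the ergodic theorem (or directly using that $\nu$ is the limit of the empirical measures): the $\nu$-measure of the set of points whose central $L_k$-block is a subword of some $\omega\omega$ with $\omega\in\mathcal B_k^F$ is precisely the asymptotic frequency with which such blocks appear, which by the counting above is at least $1 - \sum_{j\ge k-1} C/n_j$ for a suitable constant. Here the definition~\ref{def:double-bracket} of $[[\,\cdot\,]]$ matters: because it allows any length-$L_k$ window of the \emph{cyclic} word $\omega\omega$, I need not worry about boundary effects where one $b_{k,m}$ abuts the next inside a higher-level word, since such junction windows are still windows of some $\omega\omega$. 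Taking $k\to\infty$ and using $\sum 1/n_j < \infty$ (guaranteed by sufficiently rapid growth of $\{n_k\}$) gives $\nu([[\mathcal B_k^F]])\to 1$.

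The main obstacle I anticipate is twofold. First, establishing ergodicity of $\nu$ rigorously: the clean statement I want is unique ergodicity of $(X,\sigma)$, which typically follows from uniform convergence of subword frequencies across all points (a uniform version of the counting estimate), and I would want the growth conditions on $\{n_k\}$ to be strong enough to guarantee this uniformity rather than merely pointwise convergence. Second, the careful bookkeeping of boundary windows: a length-$L_k$ window centered at a $\nu$-generic point may straddle the boundary between two copies of distinct words $b_{k,m}$ and $b_{k,m'}$, or between a $b_{k,m}$ and the trailing $c_k$. I must confirm that the windows I am \emph{excluding} (those that genuinely overlap a $c_k$ block substantially) have total frequency bounded by the $\sum 1/n_j$ tail, and that the remaining windows really do land in $[[\mathcal B_k^F]]$ under the cyclic-window convention of Definition~\ref{def:double-bracket}. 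I expect this to reduce to the length estimate~\eqref{e:LkLkplus1} together with the explicit multiplicities in~\eqref{e:setofwords}, but the indexing is delicate and is where I would spend the most care.
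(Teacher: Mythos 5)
Your overall architecture matches the paper's: take empirical measures along $b_{\infty,1}$, prove a hierarchical counting estimate showing the bad windows are rare, and then extract an ergodic measure. But two steps as written contain genuine errors. First, the hope of unique ergodicity is not merely unproven — it is contradicted by the main theorem: the whole point of the construction is that $X$ carries (at least) two distinct ergodic measures, the one sought here concentrating on $[[\mathcal B_k^{F}]]$ and the one of Proposition~\ref{lem:notmanybs} concentrating on $[[c_m]]$. So the empirical measures along $b_{\infty,1}$ need not converge, and the paper accordingly takes only a weak* accumulation point of $\frac{1}{L_k}\sum_{i=0}^{L_k-1}\delta_{\sigma^i x}$ where $x$ has prefix $b_{k,1}$ for every $k$. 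Your fallback of ``passing to an ergodic component'' is the right instinct but is not automatic as stated: you need one ergodic $\nu$ with $\nu([[\mathcal B_k^{F}]])\to1$ for \emph{all} $k$ simultaneously, and a component good for one $k$ could be bad for another. The missing device is exactly the paper's Lemma~\ref{lem:ergodic-large-measure}: if the invariant measure gives each set $[[\mathcal B_m^{F}]]$ measure greater than $1-4^{-m}$, then by the ergodic decomposition, a Markov-type estimate, and a union bound over $m$, a positive-measure set of ergodic components satisfies $\mu_x([[\mathcal B_m^{F}]])>1-2^{-m}$ for every $m$ at once. Your counting does yield summable tail rates of the form $\sum_{j\geq m} C/n_j$, so it feeds into such a lemma (the paper enforces the rate $1-4^{-m}$ via condition~\eqref{e:prodcond}), but without this quantitative simultaneity step the ergodicity claim is a gap.

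Second, your third paragraph asserts that a window straddling the junction of two adjacent words $b_{k,m}$ and $b_{k,m'}$ is ``still a window of some $\omega\omega$'' and hence need not be excluded. This is false when $m\neq m'$: Definition~\ref{def:double-bracket} places in $[[\omega]]$ only the windows of $\omega\omega$ for a \emph{single} word $\omega$ (the cyclic windows of $\omega$), and a suffix of $b_{k,m}$ followed by a prefix of $b_{k,m'}$ with $m\neq m'$ lies in general in none of the sets $[[b_{k,j}]]$. Only junctions between equal consecutive copies are harmless, which is precisely why the huge multiplicities $n_k^{2(i+n_{k+1})}$ in~\eqref{e:setofwords} matter: inside one $b_{m+1,j}$ the paper bounds the distinct-word straddling windows (including those meeting the trailing $c_m$) by $L_m n_m n_m^{2(n_{m+1}+1)}$, negligible against $L_{m+1}$. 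Your last paragraph flags exactly this issue, contradicting the earlier sentence, so the idea is present but the proof as written would let through an unjustified simplification; also note that your exclusion set must contain these junction windows in addition to the $c_j$-block positions you track. With these two repairs your argument coincides with the paper's: an interior count per $b_{m+1,j}$-block times a multiplicity count obtained by iterating ``all but one $\mathcal B_i$-block is an extended Feldman word'' down the hierarchy, giving the lower bound $\prod_{i\geq m}(1-1/n_i)$, followed by the ergodic-component lemma.
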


\begin{prop}\label{prop:asequal}
Given any $\xi\in\mathcal M_e(X)$ and $\varepsilon>0$, there exists $K\in\mathbb N$ such that for all $k\ge K$ and  $b_{k,m}, b_{k,j}\in\mathcal{B}_k$,
we have 
$$ | \xi ([[b_{k,m}]]) - \xi([[b_{k,j}]]) |<\varepsilon$$
\end{prop}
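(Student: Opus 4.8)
The plan is to reduce the statement to a purely combinatorial comparison of how often the cyclic rotations of $b_{k,m}$ and of $b_{k,j}$ occur inside words of length $L_{k+1}$, and then to exploit the symmetry that each extended Feldman word appears in \emph{identical} run-patterns inside every stage-$(k+1)$ word. Write $R_m$ for the set of length-$L_k$ cyclic rotations of $b_{k,m}$, so that by Definition~\ref{def:double-bracket} we have $[[b_{k,m}]]=\bigcup_{w\in R_m}[w]$ and hence $\xi([[b_{k,m}]])=\sum_{w\in R_m}\xi([w])$. For finite words let $N(w,w')$ denote the number of occurrences of $w$ as a subword of $w'$, and set $N(R_m,w')=\sum_{w\in R_m}N(w,w')$. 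First I would record the elementary identity, valid for any shift-invariant $\xi$ and any $\ell\le L$,
$$(L-\ell+1)\,\xi([w])=\sum_{w'\in\mathcal L_L(X)}N(w,w')\,\xi([w']),$$
obtained by averaging the invariance relation $\xi(\sigma^{-p}[w])=\xi([w])$ over $0\le p\le L-\ell$. Summing over $w\in R_m$ with $\ell=L_k$, $L=L_{k+1}$ and using $\sum_{w'}\xi([w'])=1$ gives
$$\big|\xi([[b_{k,m}]])-\xi([[b_{k,j}]])\big|\le \frac{1}{L_{k+1}-L_k+1}\max_{w'\in\mathcal L_{L_{k+1}}(X)}\big|N(R_m,w')-N(R_j,w')\big|.$$
This already dispenses with all measure theory; it now suffices to show the maximum is $o(L_{k+1})$, uniformly in $m,j$.

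For the count I would first note, exactly as in the proof of Proposition~\ref{p:minimal}, that every $w'\in\mathcal L_{L_{k+1}}(X)$ is a subword of $b_{\infty,1}$, which is an infinite concatenation of stage-$(k+1)$ words; since $|w'|=L_{k+1}$ equals the common length of those words, $w'$ sits inside a product $\Omega_1\Omega_2$ of at most two consecutive words of $\mathcal B_{k+1}$. Inside a single $b_{k+1,i}=(b_{k,1}^{q}\cdots b_{k,n_k}^{q})^{r}c_k$, with $q=n_k^{2(i+n_{k+1})}$ and $r=n_k^{2(n_{k+1}-i+1)}$ as in~\eqref{e:setofwords}, each $b_{k,m}$ occupies exactly $r$ runs, each of length $q$ copies. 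A length-$L_k$ window lying entirely inside such a run is a cyclic rotation of $b_{k,m}$, and by Proposition~\ref{p:asbadmatch} the classes $R_m$ are pairwise disjoint (a shared rotation would force $b_{k,m}$ and $b_{k,j}$ to be cyclic conjugates, making $\overline f(b_{k,m}^2,b_{k,j}^2)$ small), so such a window contributes to $N(R_m,\cdot)$ and to no other $N(R_j,\cdot)$. Consequently the number of these \emph{clean} windows equals $r\big((q-1)L_k+1\big)$ for every $m$, \emph{independently of} $m$; the same symmetry holds for $c_{k+1}$, in which each $b_{k,m}$ appears exactly once. This is the crux: the main contribution to $N(R_m,w')$ is common to all $m$ and therefore cancels in the difference.

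The remaining work is the bookkeeping of error terms, and this is where I expect the only real care is needed. The count $N(R_m,w')$ deviates from the common clean count only through three kinds of positions, each of which I would bound by a small multiple of $L_{k+1}$: windows straddling a boundary between two \emph{different} stage-$k$ blocks (there are at most $n_k r$ such boundaries per stage-$(k+1)$ word, hence at most $n_k r L_k=O(n_k^{-2n_{k+1}}L_{k+1})$ positions); partial runs, indeed partial superblocks, cut off at the two ends of $w'$ and at the boundary between $\Omega_1$ and $\Omega_2$, each bounded by one superblock length $n_kqL_k=O(n_k^{-2}L_{k+1})$; and the $c_k$-parts of a possible $c_{k+1}$ factor, whose non-$c$ tail $b_{k,1}\cdots b_{k,n_k}$ has length $n_kL_k=o(L_{k+1})$. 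Since the clean count is the same for every $m$, subtracting leaves only these errors, giving $|N(R_m,w')-N(R_j,w')|\le Cn_k^{-2}L_{k+1}$ for an absolute constant $C$. Feeding this into the displayed inequality and using $L_k/L_{k+1}<1/n_k$ from~\eqref{e:LkLkplus1} yields $|\xi([[b_{k,m}]])-\xi([[b_{k,j}]])|\le 2Cn_k^{-2}$, which tends to $0$ as $k\to\infty$ because $n_k\to\infty$; choosing $K$ with $2Cn_k^{-2}<\varepsilon$ for all $k\ge K$ completes the argument. The main obstacle is thus not the probabilistic step but the verification that every discrepancy between the rotation counts of $b_{k,m}$ and $b_{k,j}$ is confined to a vanishing fraction of the positions of $w'$.
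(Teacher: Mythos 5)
Your proposal is correct, and while its combinatorial heart coincides with the paper's, the reduction to that combinatorics is genuinely different. The paper uses ergodicity: it picks a generic point $x$ for $\xi$, approximates $\xi([[b_{k,m}]])$ and $\xi([[b_{k,j}]])$ by Birkhoff averages over a long orbit segment, trims to a subword $z$ that parses exactly into $\mathcal{B}_{k+1}$-words, and pushes three $\varepsilon/4$-errors through a triangle inequality before making the same run-symmetry count you make. You instead invoke the exact averaging identity $(L-\ell+1)\,\xi([w])=\sum_{w'}N(w,w')\,\xi([w'])$, which holds for \emph{every} shift-invariant measure; this dispenses with ergodicity, with the choice of generic point and large $N$, and yields an explicit quantitative bound of order $n_k^{-2}$ rather than the paper's $2/n_k$ plus approximation errors. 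So your argument is cleaner on the measure-theoretic side and slightly more general (the conclusion holds for all invariant $\xi$), at the cost of importing the growth hypotheses on $\{n_k\}$ through your use of Proposition~\ref{p:asbadmatch} for disjointness of the rotation classes $R_1,\dots,R_{n_k}$ --- harmless here, since those are standing assumptions, but note the paper's proposition carries no growth hypothesis. Two further remarks. First, your explicit verification that the $R_\ell$ are pairwise disjoint is a point the paper leaves implicit, and it genuinely matters: if $R_\ell=R_m$ for some $\ell\neq m$, the ``clean'' counts for $R_m$ and $R_j$ would differ at order $L_{k+1}$, so your care here is a strength (and your $\overline f$ argument --- a shared rotation gives the squares a common subword of length $L_k$, forcing $\overline f(b_{k,m}^2,b_{k,j}^2)\le 1/2<5/8$ --- is valid for every pair). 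Second, like the paper (``the only possible occurrences\dots occur at the end''), you silently assume that no rotation of $c_k$ lies in any $R_m$; without this, windows inside the long $c_k$-power in $c_{k+1}$, of which there are order $L_{k+1}$, could contribute asymmetrically and destroy your error bound. The assumption is true and easy to patch: letting $\beta_k$ and $\gamma_k$ denote the number of occurrences of the symbol $c_0$ in $b_{k,i}$ and in $c_k$ respectively, the recursions from~\eqref{e:setofwords} and~\eqref{e:extraword} give $\gamma_k-\beta_k=(n_{k-1}^{4n_k+3}-n_{k-1})(\gamma_{k-1}-\beta_{k-1})>0$, and rotations preserve symbol counts, so no rotation of $c_k$ can equal a rotation of any $b_{k,m}$. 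With that observation supplied, your proof is complete.
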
 

With these results in hand, the proof of the theorem follows quickly: 
\begin{proof}[Proof of Theorem~\ref{t:nlb}]
We prove the result by contradiction.  
Assume that that the sequence $\{n_k\}$ grows sufficiently rapidly such that Propositions~\ref{p:asbadmatch}, \ref{p:notmanycs}, and~\ref{prop:asequal} hold, and assume the constructed system $(X,\sigma,\nu)$ is loosely Bernoulli. 
Fix  $\varepsilon>0$.   

Using Propositions~\ref{p:notmanycs} and~\ref{prop:asequal} and the fact that $\vert\mathcal B_k\vert\rightarrow\infty$, we can choose $K$ such that for all $k\ge K$ we have
$\frac1{\vert\mathcal B_k\vert}<\varepsilon$, 
$\nu\left([[\mathcal B_k^{F}]]\right)>1-\varepsilon$, 
and $ | \nu ([[b_{k,m}]]) - \nu([[b_{k,j}]]) |<\varepsilon$ for all $m,j$.

Then for any set of words $W\subset\mathcal{L}_{L_k}(X)$ with $\nu([W])>1-\varepsilon$, we have
\begin{equation}
\nu\left([W]\cap\mathcal [[B_k^{ F}]]\right)>1-2\varepsilon.
\end{equation}
Since there are more than $1/\varepsilon$ sets of the form $[[b_{k,i}]]$, each with similar measure, 
there must be distinct words $u,v\in W$ that are elements of $[[b_{k,m}]]$ and $[[b_{k,j}]]$, 
respectively, for some $m\neq j$.   
Since $\vert u\vert=\vert v\vert=L_k$, the words 
$u$ and $v$ must cover exactly half of $b^2_{k,m}$ and $b^2_{k,j}$, respectively.
 Suppose $\overline f(u,v)<\varepsilon$ and thus $\overline f^c(u,v) > 1-\varepsilon$. 
 Extending the match that realizes this value to all of $b^2_{k,m}$ and $b^2_{k,j}$
 gives that $\overline f^c(b^2_{k,m}, b^2_{k,j})\geq \frac{1}{2} (1-\varepsilon)$.  Equivalently, 
 this means that $\overline f(b^2_{k,m}, b^2_{k,j})< \frac{1}{2}(1+\varepsilon)$. 
 But for sufficiently small $\varepsilon$, this is a contradiction of Proposition~\ref{p:asbadmatch}. 
\end{proof}

The remainder of this section is devoted to the proofs of the three propositions.  

\subsection{Proof of Proposition~\ref{p:asbadmatch}:  bad $\overline f$ match of extensions of Feldman words}
We begin by proving that the Feldman words themselves do not match well in $\overline f$, and then use the fact that their extensions add only a small proportion of symbols to obtain our result. 

The first lemma is essentially Feldman's original argument in~\cite{feldman}.  We include it here for the sake of completeness.
\begin{lemma}\label{p:feldman}
If $\{n_k\}$ increases sufficiently rapidly then 
for all integers $r,s,k\ge 0$, if $i\neq j$ then 
\begin{equation}\label{e:fbarnotmatch}
\overline f(a_{k,i}^r,a_{k,j}^s)\ge\frac78.
\end{equation}
\end{lemma}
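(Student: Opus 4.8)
The plan is to prove the lower bound $\overline f(a_{k,i}^r, a_{k,j}^s) \ge 7/8$ by a counting argument that exploits the rigid hierarchical structure of the Feldman words. Recall that $a_{k,i} = (a_{k-1,1}^{M_i} a_{k-1,2}^{M_i} \cdots a_{k-1,n_{k-1}}^{M_i})^{N_i}$ where $M_i = n_{k-1}^{2(i+n_k)}$ is the number of consecutive repetitions of each sub-letter and $N_i = n_{k-1}^{2(n_k - i + 1)}$ is the number of outer cycles. The decisive point is that when $i \ne j$, the run-length exponents $M_i$ and $M_j$ differ by at least a full factor of $n_{k-1}^2$; so a word built from runs of length $M_i$ looks very different, at the run-length scale, from one built from runs of length $M_j$. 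The proof should be by induction on $k$. The base case $k=0$ is immediate since the $a_{0,\ell}$ are distinct single letters, so $a_{0,i}^r$ and $a_{0,j}^s$ share no common letters and $\overline f = 1 \ge 7/8$.

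For the inductive step I would analyze an arbitrary match $\pi$ between $a_{k,i}^r$ and $a_{k,j}^s$ and bound its size. The key structural observation is that $a_{k,i}^r$ is a concatenation of maximal runs of the sub-words $a_{k-1,\ell}$, where each such run has length exactly $M_i = n_{k-1}^{2(i+n_k)}$ copies of a single $a_{k-1,\ell}$, and similarly $a_{k,j}^s$ consists of runs of length $M_j$ of the sub-words. First I would argue that in any match, the proportion of matched symbols coming from \emph{distinct} sub-letters (matching some occurrence inside an $a_{k-1,\ell}$-run to an occurrence inside an $a_{k-1,\ell'}$-run with $\ell \ne \ell'$) contributes little, because by the inductive hypothesis $\overline f(a_{k-1,\ell}^{r'}, a_{k-1,\ell'}^{s'}) \ge 7/8$ forces any such cross-match to be inefficient. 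This lets me reduce to matches that are, up to a small error, between runs carrying the \emph{same} sub-letter $a_{k-1,\ell}$.

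The heart of the argument is then a length-mismatch count at the level of runs. Because the match $\pi$ is order-preserving, and because the runs of a common sub-letter $a_{k-1,\ell}$ have length $M_i$ on one side and $M_j$ on the other with $M_i / M_j \ge n_{k-1}^2$ (taking $i < j$ without loss of generality), an order-preserving matching of same-letter positions can align at most a $1/n_{k-1}^2$-fraction before the disparity in run lengths forces the matched positions on the shorter-run side to be exhausted while the longer runs leave most symbols unmatched; quantitatively, the matched proportion on the side built from the longer runs is at most something like $O(1/n_{k-1}^2)$, which becomes negligible once $n_{k-1}$ is large. Summing the genuinely matched symbols against the total length $|a_{k,i}^r| + |a_{k,j}^s|$ and invoking the two growth conditions (the inductive bound handling distinct sub-letters and the run-length gap handling identical sub-letters) yields $\overline f^c(a_{k,i}^r, a_{k,j}^s) \le 1/8$ provided $\{n_k\}$ grows sufficiently rapidly, which is exactly the hypothesis; here is where I would record the precise growth requirement on $n_{k-1}$ (large enough that the $O(1/n_{k-1}^2)$ terms and the inductive $1/8$-slack combine below $1/8$).

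I expect the main obstacle to be making the run-counting rigorous, since a match need not respect run boundaries: a single matched block can straddle several runs, and the bookkeeping that converts ``order-preserving alignment of runs of incommensurate lengths'' into a clean numerical bound on $|\pi|$ requires care. The cleanest route is probably to fix the smaller side, partition its matched indices according to which sub-letter run they fall into, and bound, run-by-run, how many of its symbols can be matched order-preservingly into the other word; the order-preservation constraint is what prevents a short run of $a_{k-1,\ell}$ from being matched into many far-apart runs of the same letter, and this is the step where the monotonicity of $\pi$ does the real work. Controlling the boundary effects where runs of one letter meet runs of the next, and ensuring these contribute only a lower-order correction, is the delicate part, but since these interfaces number only about $r\, n_{k-1} N_i$ compared to the total symbol count, rapid growth of $\{n_k\}$ absorbs them.
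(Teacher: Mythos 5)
Your overall strategy is the paper's: induct on $k$, decompose an arbitrary order-preserving match into cross-letter pieces (controlled by the inductive hypothesis applied to \emph{powers} of the level-$k$ words, which is why the statement quantifies over all $r,s$) and same-letter pieces (controlled by the disparity between the fast-cycling and slow-cycling sides), with run-boundary effects absorbed as lower-order corrections. But as written your induction cannot close. You propose to prove at every level exactly the fixed bound $\overline f^c(a_{k,i}^r,a_{k,j}^s)\le \frac18$, and in the inductive step the cross-letter matched mass is only known to fit at rate $\le\frac18$; on top of this you must \emph{add} the same-letter contribution and the completion/boundary corrections, both of which are genuinely positive (of order $1/n_k$). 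So the best the step can return is roughly $\left(\frac{n_k}{n_k-2}\right)^2\cdot\frac18+\frac{n_k}{n_k-2}\cdot\frac{2}{n_k}$, which is strictly larger than $\frac18$ for every finite $n_k$: there is no ``inductive $1/8$-slack'' available to spend, since the hypothesis is exactly $1/8$. The repair, and it is exactly what the paper does, is to strengthen the induction to $\overline f^c(a_{k,i}^r,a_{k,j}^s)\le\Gamma_k$ with $\Gamma_0=0$ and $\Gamma_{k+1}=\left(\frac{n_k}{n_k-2}\right)^2\Gamma_k+\frac{n_k}{n_k-2}\cdot\frac{2}{n_k}$, imposing the growth conditions $\prod_{k}\frac{n_k}{n_k-2}\le2$ and $\sum_k\frac{2}{n_k}\le\frac1{32}$ so that $\Gamma_k\le\frac18$ for all $k$. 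Without some such level-dependent constant the argument is circular at $7/8$.

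A secondary quantitative error: your claimed bound of $O(1/n_{k-1}^2)$ for the same-letter contribution is too strong, and the ``exhaustion of the shorter-run side'' mechanism is not the right one. The correct mechanism is density plus disjointness of spans: matched positions inside a long run of $a_{k-1,\ell}$ on the slow side must land in $\ell$-runs of the fast side, where the letter $\ell$ occupies only a $1/n_{k-1}$ fraction, and order preservation makes the spans consumed by successive long runs disjoint. This caps the same-letter fraction at $O(1/n_{k-1})$, and that order is sharp: one can match essentially \emph{all} of the $\ell$-material of the slow-cycling word (a $1/n_{k-1}$ fraction of it) in an order-preserving way, so no $1/n_{k-1}^2$ bound holds. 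The paper implements this count cleanly by partitioning the slow word into the blocks $\alpha_h^{n_k^{2m}}$, completing partial periods at multiplicative cost $\left(1-\frac{2}{n_k}\right)$, and observing that the $\alpha_h$-material inside the corresponding stretch $(\alpha_1\cdots\alpha_{n_k})^{t+2}$ contributes at most $\frac{2}{n_k}$. Since the growth condition only requires $\sum_k \frac{2}{n_k}\le\frac1{32}$, the weaker $O(1/n_k)$ bound is all that is needed, and your architecture goes through once the induction constant is made level-dependent as above.
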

\begin{proof}
Let $\{n_k\}$ be an increasing sequence with the property that 
\begin{equation}\label{e:nkcond}
\prod_{k=0}^\infty\frac{n_k}{n_k-2}\le2\quad\text{ and } \quad\sum_{k=0}^\infty\frac{2}{n_k}\le\frac1{32}.
\end{equation}
We construct a sequence 
$\{\Gamma_k\}$ by setting 
 $\Gamma_0=0$ and for $k\ge 1$, set
\begin{equation}\label{e:gamma}
\Gamma_k=\sum_{i=0}^{k-1}\left(\prod_{j=i}^{k-1}\frac{n_j}{n_j-2}\right)^2\frac2{n_i}.
\end{equation}

We prove the lemma by showing that for all $k\geq 0$ and $i\neq j$, the match
\begin{equation}\label{e:fbarfar}
\overline f^c(a_{k,i}^r,a_{k,j}^s)\le\Gamma_k.
\end{equation}
 Clearly~\eqref{e:fbarfar} holds for $k=0$ and so 
 assume that~\eqref{e:fbarfar} holds for some $k\ge 0$.  Our goal is to show that 
\begin{equation}\label{e:endwith}
\overline f^c(a_{k+1,i}^r,a_{k+1,j}^s)\le \Gamma_{k+1}, 
\end{equation}
and then the statement follows since  $\Gamma_k\le\frac18$ for all $k\geq 0$. 

Assume that $j=i+m$, with $m\ge 1$.  
Let $\alpha_h=a_{k,h}^{n_k^{2(i+n_{k+1})}}$ denote the building blocks of the word $a_{k+1,i}$.   
With this notation, we can rewrite
\begin{align*}
a_{k+1,i}^r&=\left(\alpha_1\ldots\alpha_{n_k}\right)^{n_k^{2(n_{k+1}-i+1)}\cdot r}\\ a_{k+1,j}^s&=\left(\alpha_1^{n_k^{2m}}\ldots \alpha_{n_k}^{n_k^{2m}}\right)^{n_k^{2(n_{k+1}-j+1)}\cdot s}
\end{align*}

Consider an arbitrary match $\pi$ between these two words and take the restriction of this match to each subword $\alpha_h^{n_k^{2m}}$ of $a_{k+1,j}^s$.  
Using the restriction, we can 
partition $a^r_{k+1,i}$ into disjoint subwords that contain indices all of which are matched to a unique $\alpha_h^{n_k^{2m}}$ .  Each such subword must have the form $\beta(\alpha_1\ldots\alpha_{n_k})^t\gamma$, 
where $\beta$ and $\gamma$ are substrings from the beginning and end, respectively, of $(\alpha_1\ldots\alpha_{n_k})$.  Thus to prove~\eqref{e:endwith} it suffices to show that for all $h$
\begin{equation}\label{e:newend}
\overline f^c(\alpha_h^{n_k^{2m}},\beta(\alpha_1\ldots\alpha_{n_k})^t\gamma)\le \Gamma_{k+1}.
\end{equation}

Instead, we consider the quantity
\begin{equation}\label{e:firstexp}
\overline f^c(\alpha_h^{n_k^{2m}},(\alpha_1\ldots\alpha_{n_k})^{t+2})
\end{equation}
where we have added at most $2\vert\alpha_h\vert n_k$ symbols to $\beta(\alpha_1\dots\alpha_{n_k})^t\gamma$ and the worst fit would be if none of these additional symbols improved the match between the original pair of strings. 
Therefore, letting $\ell_{\mathcal{O}}=\vert\alpha_h^{n_k^{2m}}\vert+\vert\beta(\alpha_1\ldots\alpha_{n_k})^t\gamma\vert$ 
denote the lengths of the original two strings being matched, we have 
\begin{align*}
\overline f^c(\alpha_h^{n_k^{2m}},(\alpha_1\dots\alpha_{n_k})^{t+2})&\ge\frac{\vert\pi\vert}{\ell_{\mathcal{O}} + 2\vert\alpha_h\vert n_k}\\
&\geq \overline f^c(\alpha_h^{n_k^{2m}},\beta(\alpha_1\dots\alpha_{n_k})^t\gamma)\left(1-\frac{2\vert\alpha_h\vert n_k}{\vert\alpha_h\vert n_k^{2m}}\right)\\
&\ge \overline f^c(\alpha_h^{n_k^{2m}},\beta(\alpha_1\ldots\alpha_{n_k})^t\gamma)\left(1-\frac2{n_k}\right).
\end{align*}
Thus to prove~\eqref{e:endwith}, it suffices to show that
\begin{equation}\label{e:newgoal}
\frac{n_k}{n_k-2}\overline f^c(\alpha_h^{n_k^{2m}},(\alpha_1\ldots\alpha_{n_k})^{t+2})\le\Gamma_{k+1}.
\end{equation}

For ease of notation, define $\omega=\alpha_h^{n_k^{2m}}$ and $\omega'=(\alpha_1\ldots\alpha_{n_k})^{t+2}.$
Consider the partition of $\omega$ into disjoint subwords $\omega_{u,v}$ corresponding to contiguous subblocks that contain (but do not necessarily consist of) indices matched by $\pi$ to a symbol in the $v$-th occurrence of $\alpha_u$ in $\omega'$.  Formally we define $\omega_{u,v}$ to be the subblock of $\omega$ corresponding to the indices in the interval $\omega_{[i_*,i^*]}$, where 
$$
i_*=\min\{i\colon \pi(i)\text{ lies in the $v$-th occurrence of $\alpha_u$ in $\omega'$}\}
$$
and 
$$
i^*=\min\{i\ge i_*\colon i\in\mathcal I^\pi_{\omega}, \text{ but } \pi(i)\text{ does not lie in the $v$-th occurrence of $\alpha_u$}\}-1,
$$
recalling that the notation 
 $\mathcal I^\pi_{\omega}$ that was introduced at the beginning of Section~\ref{sec:loose}.  
Since $\pi$ is order preserving, these blocks are disjoint and contiguous.     In order to guarantee that
$\omega_{u,v}$ be a partition of $\omega$, we add any initial (respectively, final) indices in $\omega$ that are not matched to anything in $\omega'$ to $\omega_{1,1}$ (respectively, $\omega_{n_k,t+2}$).  Note that it is possible for some $\omega_{u,v}$ to be empty.  

Adopting this notation, we have:
\begin{align*}
\overline f^c(\omega,\omega')&=\frac{2\left(\text{\# of total indices in $\omega'$ that are matched by $\pi$}\right)}{\vert\omega\vert+\vert\omega'\vert}\nonumber\\
= &\frac{2}{\vert\omega\vert+\vert\omega'\vert}\sum_{u=1}^{n_k}\left(\text{\# of indices in $\omega'$ matched by $\pi$ lying in an occurrence of $\alpha_u$}\right).
\end{align*}

Note that if $u=h$, then there is the possibility that $\pi$ provided a perfect match for every possible occurrence of $\alpha_h$ in $\omega'$.   Recall that 
\begin{equation}\label{e:lengths}
\vert\omega\vert=\vert\alpha_h\vert n_k^{2m}\text{ and }\vert\omega'\vert=\vert\alpha_h\vert(t+2) n_k.
\end{equation}
So 
 \begin{align*}
\frac{2}{\vert\omega\vert+\vert\omega'\vert}&\left(\text{ \# of indices in $\omega'$ matched by $\pi$ lying in an occurrence of $\alpha_h$}\right)\\
&\le\frac{2\vert\alpha_h\vert(t+2)}{\vert\alpha_h\vert n_k^{2m}+\vert\alpha_h\vert(t+2) n_k}
\le \frac{2}{n_k}.
\end{align*}

We now turn to the matches between $\alpha_u$ and $\omega_{u,v}$ where $u\neq h$, namely the other summands:
\begin{align}
\frac{2}{\vert\omega\vert+\vert\omega'\vert}&\sum_{u=1,u\neq h}^{n_k}\sum_{v=1}^{t+2}\left(\text{\# of indices matched in $\omega'$ lying in the $v$-th occurrence of $\alpha_u$}\right)\nonumber\\
&=\frac{1}{\vert\omega\vert+\vert\omega'\vert}\sum_{u=1, u\neq h}^{n_k}\sum_{v=1}^{t+2}\overline f^c(\omega_{u,v},\alpha_u)\left(\vert\alpha_u\vert+\vert\omega_{u,v}\vert\right).\label{e:orig}
\end{align}
Recall that $\alpha_u=a_{k,u}^p$ and $\omega_{u,v}=ba_{k,h}^{p'}c$ for some $p,p'\in\mathbb N$ and where $c$ and $d$ are the end and beginning substrings of $a_{k,h}$, respectively. As before we complete each $\omega_{u,v}$ to $a_{k,h}^{p'+2}$, 
adding at most $2\vert a_{k,h}\vert$ symbols, obtaining a match between strings where our inductive hypothesis holds.  Therefore, for each $u,v$ we have
\begin{equation}\label{e:uv}
\Gamma_k\ge\overline f^c(a_{k,h}^{p'+2},\alpha_u)\ge\overline f^c(\omega_{u,v},\alpha_u)\left(1-\frac{2}{n_k}\right).
\end{equation}
Then we have that the quantity in~\eqref{e:orig} is less than or equal to 
$$
\frac{1}{\vert\omega\vert+\vert\omega'\vert} \sum_{u=1,u\neq h}^{n_k}\sum_{v=1}^{t+2}\left(\frac{n_k}{n_k-2}
\right)\Gamma_k\left(\vert\alpha_u\vert+\vert\omega_{u,v}\vert\right).
$$
Recall that $\vert\alpha_u\vert=\vert\alpha_h\vert$ for all $u$  and so we have that this last quantity is equal to
$$
\frac{1}{\vert\omega\vert+\vert\omega'\vert}\left(\frac{n_k}{n_k-2}\right)\Gamma_k\left[n_k(t+2)\vert\alpha_h\vert+\sum_{u=1,u\neq h}^{n_k}\sum_{v=1}^{t+2}\vert\omega_{u,v}\vert\right].
$$
By~\eqref{e:lengths} and the fact that the $\omega_{u,v}$ form a partition of $\omega$, this is 
$$
\le \frac{1}{\vert\omega\vert+\vert\omega'\vert}\left(\frac{n_k}{n_k-2}\right)\Gamma_k\left(\vert\omega'\vert+\vert\omega\vert\right)=\left(\frac{n_k}{n_k-2}\right)\Gamma_k.\\
$$
Putting all this together with~\eqref{e:gamma}, we see that~\eqref{e:newgoal} is satisfied:
 \begin{equation*}
 \frac{n_k}{n_k-2}\overline f^c(\alpha_h^{n_k^{2m}},(\alpha_1\cdots\alpha_{n_k})^{t+2})\le\left(\frac{n_k}{n_k-2}\right)^2\Gamma_k+\left(\frac{n_k}{n_k-2}\right)\frac{2}{n_k}\le\Gamma_{k+1}.\quad \qedhere
\end{equation*}
  \end{proof}
  
The following property of the $\overline f$ metric (see for example~\cite[Property 2.4]{GK}) is used in the proof of Proposition~\ref{p:asbadmatch}:
\begin{lemma}\label{l:gerber}
Suppose $b_1$ and $b_2$ are strings of symbols of length $n$ and $m$, respectively, from an alphabet $\mathcal A$.  If $a_1$ and $a_2$ are strings of symbols obtained by deleting at most $\lfloor\rho(n+m)\rfloor$ terms from $b_1$ and $b_2$ altogether, where $0< \rho<1$, then
\begin{equation}\label{l:remove}
\overline f(b_1,b_2)\ge \overline f(a_1,a_2)-2\rho.
\end{equation}  
\end{lemma}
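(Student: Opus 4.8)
The plan is to pass to the complementary best-fit quantity $\overline f^c = 1-\overline f$: since $\overline f = 1 - \overline f^c$, the inequality~\eqref{l:remove} is equivalent to
\[
\overline f^c(b_1,b_2)\le \overline f^c(a_1,a_2)+2\rho.
\]
Thus it suffices to bound the best fit of the longer pair $(b_1,b_2)$ in terms of the best fit of the deleted pair $(a_1,a_2)$. The natural way to do this is to take a match realizing $\overline f^c(b_1,b_2)$ and discard exactly those matched pairs that involve a deleted symbol, producing a match between $a_1$ and $a_2$ that is almost as large.

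\textbf{Restricting an optimal match.} First I would fix an optimal match $\pi$ between $b_1$ and $b_2$, so that $\vert\pi\vert = \overline f^c(b_1,b_2)\,(n+m)$, and write $D_1\subset\{1,\dots,n\}$ and $D_2\subset\{1,\dots,m\}$ for the sets of deleted indices, with $d:=\vert D_1\vert+\vert D_2\vert\le\lfloor\rho(n+m)\rfloor$. The surviving indices of $b_1$ and $b_2$, listed in order, are precisely the indices of $a_1$ and $a_2$; since deletion is order preserving, restricting $\pi$ to the matched pairs $(i,\pi(i))$ with $i\notin D_1$ and $\pi(i)\notin D_2$ and relabelling by the monotone bijections onto $\{1,\dots,\vert a_1\vert\}$ and $\{1,\dots,\vert a_2\vert\}$ yields a genuine match $\pi'$ between $a_1$ and $a_2$. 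Because $\pi$ is a bijection, the number of discarded pairs is at most $\vert D_1\vert+\vert D_2\vert=d$, and as each pair contributes $2$ to the size we get $\vert\pi'\vert\ge\vert\pi\vert-2d$. Since $\vert a_1\vert+\vert a_2\vert=(n+m)-d$, this gives
\[
\overline f^c(a_1,a_2)\ge\frac{\vert\pi'\vert}{(n+m)-d}\ge\frac{\vert\pi\vert-2d}{(n+m)-d}.
\]

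\textbf{The final estimate.} What remains is arithmetic, and it is the one place requiring care. Writing $N=n+m$ and $c=\overline f^c(b_1,b_2)=\vert\pi\vert/N$, I want to conclude $\frac{\vert\pi\vert-2d}{N-d}\ge c-2\rho$. If $\vert\pi\vert-2d\ge0$, then shrinking the denominator only increases the fraction, so $\frac{\vert\pi\vert-2d}{N-d}\ge\frac{\vert\pi\vert-2d}{N}=c-\frac{2d}{N}\ge c-2\rho$, using $d\le\rho N$. If instead $\vert\pi\vert-2d<0$, then $c<2d/N\le2\rho$, whence $c-2\rho<0\le\overline f^c(a_1,a_2)$ and the inequality holds trivially. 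Either way $\overline f^c(a_1,a_2)\ge\overline f^c(b_1,b_2)-2\rho$, which is the reformulated claim. The main obstacle is exactly this last estimate: the denominator of the best fit also shrinks under deletion, and one must notice that—once the numerator is nonnegative—this shrinking works in our favor rather than against us. This is why the honest loss is $2\rho$ and not the weaker $2\rho/(1-\rho)$ one would obtain by merely bounding the denominator below by $N(1-\rho)$.
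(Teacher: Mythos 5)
Your proof is correct. Note that the paper itself gives no proof of this lemma at all: it is quoted from Gerber--Kunde \cite[Property 2.4]{GK}, so there is no in-paper argument to compare against, and what you have written supplies exactly the standard argument behind that citation. Each step checks out: restricting an optimal match $\pi$ on $(b_1,b_2)$ to the undeleted indices and relabelling monotonically does yield a genuine match on $(a_1,a_2)$ (symbols and order are preserved by deletion); since $\pi$ is a bijection, each of the $d$ deleted indices destroys at most one matched pair, giving $\vert\pi'\vert\ge\vert\pi\vert-2d$ with the correct denominator $(n+m)-d$; and your case split on the sign of $\vert\pi\vert-2d$ is precisely the observation needed to get the clean constant $2\rho$ rather than $2\rho/(1-\rho)$ --- when the numerator is nonnegative, the shrunken denominator $(n+m)-d\le n+m$ can only help, and when it is negative the claim is vacuous because $\overline f^c(b_1,b_2)<2\rho$ already. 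The only degenerate case, $a_1$ or $a_2$ empty, is also harmless: $\rho<1$ forces a surviving symbol somewhere, and if (say) all of $b_1$ is deleted then $d\ge\vert\pi\vert/2$, so your second case applies.
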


\begin{proof}[Proof of Proposition~\ref{p:asbadmatch}]
Suppose the sequence $\{n_k\}$ grows sufficiently rapidly such that both~\eqref{e:nkcond} is satisfied and  
\begin{equation}\label{e:n_k}
\prod_{j=0}^\infty\frac{n_j^{4n_{j+1}+3}}{n_j^{4n_{j+1}+3}+1}>\frac78. 
\end{equation}
We then have
\begin{equation*}
\frac{\vert a_{k,i}\vert}{\vert b_{k,i}\vert}=\prod_{j=0}^{k-1}\frac{n_j^{4n_{j+1}+3}}{n_j^{4n_{j+1}+3}+1}>\frac78
\end{equation*}
and thus any two Feldman words $a_{k,j}$ and $a_{k,i}$ are obtained from the extended words $b_{k,j}$ and $b_{k,i}$ by eliminating at most $\frac18(L_k+L_k)$ symbols. 
Therefore we can apply Lemma~\ref{l:gerber} with $\rho=\frac18$ and obtain that $
\overline f(b_{k,i}^r,b_{k,j}^s)\ge\overline f(a_{k,i}^r,a_{k,j}^s)-\frac14$.  
Using the result of Lemma~\ref{p:feldman}, we can conclude that $\overline f(b_{k,i}^r,b_{k,j}^s)\ge \frac58$.  
\end{proof}

\subsection{Proofs of Propositions~\ref{p:notmanycs} and~\ref{prop:asequal}: properties of ergodic measures on $(X,\sigma)$.}
We start with the proof of Proposition~\ref{p:notmanycs}, which depends on the following lemma:
\begin{lemma}\label{lem:ergodic-large-measure}
Let $(X,\sigma,\mu)$ be a measure preserving system and let $\{b_m\}_{m=1}^{\infty}$ be a sequence of measurable sets satisfying $\mu(b_m)>1-\frac{1}{4^m}$ for all $m\geq 1$  Then there exists an ergodic measure $\nu$, supported on $X$, which satisfies $\nu(b_m)>1-\frac{1}{2^m}$ for all $m\geq 1$.
\end{lemma}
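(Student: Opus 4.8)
The plan is to invoke the ergodic decomposition of $\mu$ and then extract a single good ergodic component by a Borel--Cantelli style argument. Write the ergodic decomposition as $\mu=\int_{X}\nu_x\,d\mu(x)$, where for $\mu$-almost every $x$ the measure $\nu_x\in\mathcal M_e(X,\sigma)$ and the map $x\mapsto\nu_x(A)$ is measurable for every Borel set $A$. For each fixed $m$ this gives $\mu(b_m)=\int_{X}\nu_x(b_m)\,d\mu(x)$, so the hypothesis $\mu(b_m)>1-4^{-m}$ translates into a bound on the averaged defect, namely $\int_{X}\nu_x(b_m^c)\,d\mu(x)=\mu(b_m^c)<4^{-m}$. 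The goal is then to produce a single $x$ for which the defect $\nu_x(b_m^c)$ is simultaneously small for every $m$, and to take $\nu=\nu_x$.

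The key step is a Markov inequality applied level by level. For each $m$ set $E_m=\{x\in X\colon\nu_x(b_m^c)\ge 2^{-m}\}$, which is exactly the set of components failing the desired bound $\nu_x(b_m)>1-2^{-m}$. Since $x\mapsto\nu_x(b_m^c)$ is nonnegative with integral strictly less than $4^{-m}$, Markov's inequality yields
\[
\mu(E_m)\le 2^m\int_{X}\nu_x(b_m^c)\,d\mu(x)=2^m\,\mu(b_m^c)<2^m\cdot 4^{-m}=2^{-m}.
\]
Summing over $m$ and using that the $m=1$ term is already strictly below $1/2$, I obtain $\mu\bigl(\bigcup_{m\ge 1}E_m\bigr)\le\sum_{m\ge1}\mu(E_m)<\sum_{m\ge1}2^{-m}=1$. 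Hence the complement $X\setminus\bigcup_{m}E_m$ has positive $\mu$-measure, and in particular is nonempty; choosing any $x$ in it and setting $\nu=\nu_x$ produces an ergodic measure on $X$ with $\nu(b_m^c)<2^{-m}$, i.e. $\nu(b_m)>1-2^{-m}$, for every $m$, as required.

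The only delicate point I anticipate is measurability: the whole argument rests on $x\mapsto\nu_x(b_m^c)$ being a measurable function, so that both the integral identity $\mu(b_m^c)=\int_X\nu_x(b_m^c)\,d\mu$ and Markov's inequality are legitimate. This is precisely what the ergodic decomposition theorem supplies, so no additional work is needed beyond citing it, and each $\nu_x$ is automatically an ergodic probability measure supported on $X$. Everything else reduces to a routine summation; the one feature worth flagging is that the strictness $\sum_m\mu(E_m)<1$—rather than merely $\le 1$—is what guarantees a nonempty good set instead of only a full-measure bad set, and this strictness is inherited for free from the strict gap in the hypothesis at the first level.
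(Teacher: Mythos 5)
Your proof is correct and follows essentially the same route as the paper: the paper's estimate $1-4^{-m}<\mu(a_m)+(1-2^{-m})(1-\mu(a_m))$ is exactly your Markov inequality applied to $x\mapsto\nu_x(b_m^c)$, and both arguments use the strictness of the level-wise bounds to get $\mu\bigl(\bigcap_m a_m\bigr)>0$ before selecting an ergodic component. The only cosmetic point is that ``choosing any $x$'' in the good set should be ``choosing an $x$ in the good set that also lies in the full-measure set where $\nu_x$ is ergodic,'' which your opening setup already provides.
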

\begin{proof}
If $\mu$ is ergodic, then we are done by setting $\nu=\mu$.  Otherwise recall that $\mu$ has an ergodic decomposition, meaning there is a measurable map, $x\mapsto\mu_x$, from $X$ to the space of probability measures on $X$ satisfying with the property that $\mu_x$ is ergodic for $\mu$-almost everywhere $x\in X$, and for any measurable set $M$ we have 
$$ 
\mu(M)=\int_X\mu_x(M)d\mu(x).
$$
For each $m\geq 1$, define the measurable set 
$
a_m:=\left\{x\in X\colon\mu_x(b_m)>1-\frac{1}{2^m}\right\}. 
$ 
Then for any fixed $m$, we have 
\begin{eqnarray*} 
1-\frac{1}{4^m}&<&\mu(b_m) =\int_X\mu_x(b_m)d\mu(x) 
=\int_{a_m}\mu_x(b_m)d\mu(x)+\int_{X\setminus a_m}\mu_x(b_m)d\mu(x) \\ 
&\leq&\mu(a_m)+\left(1-\frac{1}{2^m}\right)\left(1-\mu(a_m)\right). 
\end{eqnarray*} 
Therefore $\mu(a_m)>1-\frac{1}{2^m}$ or, equivalently, $\mu(X\setminus a_m)<\frac{1}{2^m}$.  This means that 
$$
\mu\left(\bigcap_{m=1}^{\infty}a_m\right)=1-\mu\left(\bigcup_{m=1}^{\infty}(X\setminus a_m)\right) \geq1-\sum_{m=1}^{\infty}\mu(X\setminus a_m)  
$$
which gives us $\mu\left(\bigcap_{m=1}^{\infty}a_m\right)>0$.  But for $\mu$-almost every $x\in X$, the measure $\mu_x$ is ergodic and so there exists $x\in\bigcap_{m=1}^{\infty}a_m$ such that $\mu_x$ is ergodic.  Pick such an $x$ and define $\nu:=\mu_x$.  Then, since $x\in a_m$ for all $m\geq 1$, we have $\nu(b_m)=\mu_x(b_m)>1-\frac{1}{2^m}$.
\end{proof}

We are now ready to prove Proposition~\ref{p:notmanycs}, which we recall states that there is an ergodic measure on $X$ that gives large measure to the sets $[[\mathcal B_m^{F}]]$ for sufficiently large $m$. 
\begin{proof}[Proof of Proposition~\ref{p:notmanycs}]
By Lemma~\ref{lem:ergodic-large-measure}, it suffices to show that  there is a $\sigma$-invariant (but not necessarily ergodic) measure $\mu$ with the property that for all $m\geq 1$, we have 
$$
\mu\left([[\mathcal B_m^{ F}]]\right)>1-\frac1{4^m}.
$$
Suppose the sequence $\{n_k\}$ grows sufficiently rapidly such that for all $m\geq 1$, 
\begin{equation}\label{e:prodcond}
\prod_{i=m}^\infty\left(1-\frac1{n_i}\right)>1-\frac1{4^{m+1}}.
\end{equation}

Fix an arbitrary $m\ge 1$ and choose $x\in X$ such that the restriction of $x$ to its first $L_k$ 
symbols is exactly the word $b_{k,1}$ for every $k\geq 1$.  Let $\mu$ to be a weak* accumulation point of measures of the form
$$
\frac{1}{L_k}\sum_{i=0}^{L_k-1} \delta_{\sigma^ix}.
$$
Note that this means there is a subsequence $k_j$ and a value $J=J(m)$ 
such that for all $j\ge J$,
 \begin{equation}
 \label{eq:weak-star-close}
\left| \mu\left([[\mathcal B_m^{ F}]]\right) - \frac{1}{L_{k_j}}\sum_{i=0}^{L_{k_j}-1} \delta_{\sigma^ix} \left([[\mathcal B_m^{ F}]]\right) \right| < \frac{1}{4^{m+1}}.
\end{equation}

We show below that for all large $k$,
\begin{equation}
\label{eq:second-cond}
  \frac{1}{L_k}\sum_{i=0}^{L_k-1} \delta_{\sigma^ix} \left([[\mathcal B_m^{ F}]]\right)   > 1- \frac{1}{4^{m+1}},
\end{equation}
where  as usual $\delta$ denotes the Dirac measure.  Together  the two inequalities  ~\eqref{eq:second-cond} and  (\ref{eq:weak-star-close})  yield 
 that 
 $\mu\left([[\mathcal B_m^{F}]]\right) > 1-\frac{1}{4^m}$, as wanted.

It only remains to show that  ~\eqref{eq:second-cond} holds.  
By the definition of $x$,  
it suffices to count the number of $L_m$-sized subwords in $b_{k,1}$ that lie in 
$[[\mathcal B_m^{ F}]]$.
Note that $b_{k,1}$ can be thought of as the concatenation of blocks from $\mathcal{B}_{m+1}$, 
meaning the concatenation of blocks of the form $c_{m+1}$ and  $b_{m+1,j}$, $j=1,\ldots,n_{m+1}$.  
Thus, we can obtain a lower bound on the number of these $L_m$-sized subwords in $b_{k,1}$ by counting 
the number of $L_m$-sized subwords in $b_{k,1}$ that are within one of the $\mathcal{B}_{m+1}$-words  that lie in $[[\mathcal B_m^{ F}]]$.  
This quantity is in turn bounded below by the product  of the interior count $I_m$ and the multiplicity count $M_m$, 
where $I_m$ is the number of $L_m$-sized subwords within one $b_{m+1,j}$-block  that lie in $[[\mathcal B_m^{ F}]]$ 
and $M_m$ is the number of $b_{m+1,j}$-blocks in $b_{k,1}$. 
We count each of these separately.

For the interior count $I_m$, consider one $b_{m+1,j}$ block and consider all of the $L_m$-sized windows that lie in $[[\mathcal B_m^{F}]]$.  
Such a window yields a subword that is {\it{not}} in
$[[\mathcal B_m^{ F}]]$ exactly when it straddles two words of the form $b_{m,i}$ and $b_{m,i+1}$, $b_{m,n_m}$ and $b_{m,1}$, or $b_{m,n_m}$ and $c_m$.  The number of such 
subwords is at most $L_m  n_m n_m^{2(n_{m+1}+1)}$.  Since there are exactly $L_{m+1} - (L_m-1)$ subwords of length $L_m$ in any $b_{m+1,j}$   we have  
$$I_m \ge L_{m+1} - (L_m-1) - L_m n_m^{2n_{m+1}+3} \ge L_{m+1} - L_m - L_m n_m^{2n_{m+1}+3}.$$

For the multiplicity count  $M_m$, we again
view $b_{k,1}$ as the concatenation of blocks from $\mathcal{B}_{m+1}$ and count the number that are of the form
$b_{m+1,j}$ for some $j$.  We do this in steps: first view $b_{k,1}$ as the concatenation of blocks from $\mathcal{B}_{k-1}$ and note that there must be $L_k/L_{k-1}$ such blocks and all but one is of the form $b_{k-1,i}$ for some $i$.  Each of these $b_{k-1,i}$, in turn, can be thought of as the
concatenation of blocks from $\mathcal{B}_{k-2}$.  There are $L_{k-1}/L_{k-2}$ such blocks and all but one is of the form $b_{k-2,i}$ for some $i$.  
We continue in this vein, and after $k-(m+1)$ steps, we have that the number of the $\mathcal{B}_{m+1}$-blocks of the form $b_{m+1,i}$ for some $i$ is 
bounded below by
$$
\prod_{i=m+2}^k\left(\frac{L_i}{L_{i-1}}-1\right)=\prod_{i=m+2}^k\frac{L_i}{L_{i-1}}\left(1-\frac{L_{i-1}}{L_i}\right)
$$

Combining these two estimates, we have that
$$\frac{1}{L_k}\sum_{i=0}^{L_k-1} \delta_{\sigma^ix} \left([[\mathcal B_m^{F}]]\right)   \ge
\frac{1}{L_k}\left( L_{m+1} - L_m - L_m n_m^{2n_{m+1}+3} \right)  \prod_{i=m+2}^k\frac{L_i}{L_{i-1}}\left(1-\frac{L_{i-1}}{L_i}\right).$$
Since $L_k = \frac{L_k}{L_{m+1}}L_{m+1}$, we can rewrite the right hand side as
\begin{align*}
\frac{1}{L_{m+1}}& \left( L_{m+1} - L_m - L_m n_m^{2n_{m+1}+3} \right)  \frac{\prod_{i=m+2}^k\frac{L_i}{L_{i-1}}\left(1-\frac{L_{i-1}}{L_i}\right)}{\prod_{i=m+2}^k\frac{L_i}{L_{i-1}}}.
\end{align*}
Then by~\eqref{eq:lengthL} and~\eqref{e:LkLkplus1}, this last quantity is greater than or equal to 
$$
\left( 1 - \frac{1}{n_m} \right)  \prod_{i=m+2}^{k} \left( 1 - \frac{1}{n_{i-1}}\right) =\prod_{i=m}^{k-1} \left( 1 - \frac{1}{n_i}\right)\ge\prod_{i=m}^\infty\left( 1 - \frac{1}{n_i}\right),
$$
which by condition~\eqref{e:prodcond} shows that  $\frac{1}{L_k}\sum_{i=0}^{L_k-1} \delta_{\sigma^ix} \left([[\mathcal B_m^{ F}]]\right)>1-\frac1{4^{m+1}}$.
\end{proof}

We end this section by proving Proposition~\ref{prop:asequal} which assures us that all ergodic measures on $X$ for fixed $k$ give approximately the same measure to 
sets of the form $[[b_{k,m}]]$.  

\begin{proof}[Proof of Proposition~\ref{prop:asequal}]
Fix $\varepsilon>0$.  Since $n_k\rightarrow\infty$, we can choose $K\in\mathbb N$ such that for all $k\ge K$ we have $\frac2{n_k}<\frac{\varepsilon}4$.
Choose such a $k$ and consider any pair $b_{k,m}, b_{k,j}\in\mathcal{B}_k$.  By the ergodicity of $\xi$,  
we can find a point $x\in X$ such that $$
\xi  ([[b_{k,m}]]) = \lim_{n\to\infty}\frac{1}{n}\sum_{i=0}^{n-1}1_{[[b_{k,m}]]}(\sigma^ix) 
$$
and 
$$
\xi  ([[b_{k,j}]]) = \lim_{n\to\infty}\frac{1}{n}\sum_{i=0}^{n-1}1_{[[b_{k,j}]]}(\sigma^ix).
$$
We first show that it is enough to look at the frequency of the sets $[[b_{k,m}]]$ and $[[b_{k,j}]]$ in a certain subword of the point $x$.

We rewrite the difference:
\begin{eqnarray*} 
\left|  \xi([[b_{k,m}]])  -   \xi([[b_{k,j}]]) \right|   &\leq&   \left| \xi([[b_{k,m}]]) - \frac{1}{N}\sum_{i=0}^{N-1}1_{[[b_{k,m}]]}(\sigma^ix) \right|+ \\ 
&&  \left| \xi([[b_{k,j}]]) - \frac{1}{N}\sum_{i=0}^{N-1}1_{[[b_{k,j}]]}(\sigma^ix) \right| + \\
&&  \left|  \frac{1}{N}\sum_{i=0}^{N-1}1_{[[b_{k,m}]]}(\sigma^ix)  -   \frac{1}{N}\sum_{i=0}^{N-1}1_{[[b_{k,j}]]}(\sigma^ix) \right|.
\end{eqnarray*} 
Choosing $N$ large enough, we can assume that the first two terms are each bounded by $\frac\varepsilon4$.  
The last term is the difference between the number of times $\sigma^ix$ lands in the set $[[b_{k,m}]]$ as compared to $[[b_{k,j}]]$, for $i = 0$ to $N-1$.  In other words, 
this last term simply gives the difference between the number of $L_{k}$-length-subblocks of $x_{[0,N-1]}$ that are in  $[[b_{k,m}]]$ as compared to $[[b_{k,j}]]$.

Since $x$ can be written as a concatenation of words from 
$\mathcal{B}_{k+1}$, there is a subword $z$ of $x_{[0,N-1]}$ whose length is at least $N - 2 L_{k+1}$ that can be written exactly as a concatenation of 
words from $\mathcal{B}_{k+1}$.  
 We restrict our attention to this subword $z$, and let $D_N$ denote the difference between the number of $L_{k}$-length-subblocks of $z$ that are in  $[[b_{k,m}]]$ as compared to $[[b_{k,j}]]$.  Then choosing $N$ such that  $\frac{2L_{k+1}}{N} <\varepsilon/4$, we have
\begin{eqnarray*} 
 \frac{1}{N} \left| \sum_{i=0}^{N-1}1_{[[b_{k,m}]]}(\sigma^ix)  - \sum_{i=0}^{N-1}1_{[[b_{k,j}]]}(\sigma^ix) \right|  &\leq&  \varepsilon/4 + \frac{1}{N} D_N.
\end{eqnarray*} 
We thus have 
\begin{eqnarray*} 
\left|  \xi([[b_{k,m}]])  -   \xi([[b_{k,j}]]) \right|   &\leq&  3\varepsilon/4 + \frac{1}{N} D_N.
\end{eqnarray*}

We are then left with showing that $D_N/N$ is sufficiently small.  Consider the subword $z$  and divide it into disjoint subblocks of $(k+1)$-words from $\mathcal{B}_{k+1}$.  As we look at subblocks of $z$ of length $L_k$, these are either be entirely contained in one of these $(k+1)$-words or partially overlapping two adjacent $(k+1)$-words.

Let us first consider the $L_k$-length-subblocks of the second type, those overlapping two adjacent 
$(k+1)$-words in $z$.  Since the number of ways an $L_k$-length-subblock can overlap two specific $(k+1)$-words is $L_k -1$, and the number of adjacent $(k+1)$-words in $z$ is bounded by $|z|/L_{k+1} \le N/L_{k+1}$, we can bound the difference between these $L_k$-length-subblocks that are in  $[[b_{k,m}]]$ as compared to $[[b_{k,j}]]$ by $L_k  (N/L_{k+1}) = N  (L_k/L_{k+1}) \le N  (1/n_k)$.

We next consider the $L_k$-length-subblocks of the first type, the ones entirely contained in one of the $(k+1)$-words from $\mathcal{B}_{k+1}$.  If this $(k+1)$-word is an extended Feldman word (see (\ref{e:setofwords})\,)
then we see that blocks of length $L_k$ either lie within a repeated $k$-word, 
$b_{k,\ell}^{n_{k}^{2(n_{k+1}+i)}}$, or an overlap between two $k$-words, 
$b_{k,\ell}b_{k,\ell+1}$.  Note that the number of occurrences of $[[b_{k,m}]]$ and  
$[[b_{k,j}]]$ in $b_{k,\ell}^{n_{k}^{2(n_{k+1}+i)}}$, as $\ell$ ranges from 1 to $n_k$, are exactly the same.  Thus we can bound the difference by the number of $L_k$-length-subblocks that overlap a subblock of the form $b_{k,\ell}b_{k,\ell+1}$.  This is bounded by $L_k n_k n_k^{2(n_{k+1}-i+1)}$ for one $(k+1)$-word.

In the case that the $(k+1)$-word has the form of (\ref{e:extraword}), 
then the only possible occurrences of  $[[b_{k,m}]]$ and $[[b_{k,j}]]$ occur at the end, when $c_{k+1}$ cycles through the various $b_{k,\ell}$.   We can thus bound the difference between the occurrences of these sets by $L_k n_k$, which is less than the bound used above.

Thus altogether we have that 
$$  D_N \le N  (1/n_k) + L_k n_k n_k^{2(n_{k+1}-i+1)} ({\text{number of }}(k+1){\text{-words in $z$}}).$$
Using that $N\ge ({\text{number of $(k+1)$-words in $z$}}) L_{k+1}$, we have 
$$
\frac{D_N}{N} \le \frac{1}{n_k} + \frac{L_k n_k n_k^{2(n_{k+1}-i+1)}}{L_{k+1}} < \frac{1}{n_k} + \frac{1}{n_k} = \frac{2}{n_k}.
$$
It then follows that 
\begin{equation*} 
\left|  \xi([[b_{k,m}]])  -   \xi([[b_{k,j}]]) \right|   \leq  \varepsilon. \quad\qedhere
\end{equation*} 

\end{proof} 

 \section{Loosely Bernoulli}
\label{sec:lbb}
In Section~\ref{sec:nlb}, we made use of the words $b_{k,i}$ to find a measure that yielded a non-loosely Bernoulli system.  Now we make use of the words $c_k$ to find a loosely Bernoulli system.
We begin with a result that is analogous to Proposition~\ref{p:notmanycs} in that it  shows that there is an ergodic measure that gives large measure to the sets $[[c_m]]$ for sufficiently large $m$.

\begin{prop}\label{lem:notmanybs}
If the sequence $\{n_k\}$ grows sufficiently rapidly then there exists an ergodic measure $\xi$ supported on $X$ with the property that 
\begin{equation}\label{e:bigmsrc}
\lim_{m\to\infty}\xi ([[c_m]]) =1.
\end{equation}
\end{prop}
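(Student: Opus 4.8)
The plan is to mirror the structure of the proof of Proposition~\ref{p:notmanycs}, but now tracking the word $c_m$ rather than the extended Feldman words. By Lemma~\ref{lem:ergodic-large-measure}, it suffices to produce a (not necessarily ergodic) shift-invariant measure $\xi_0$ with $\xi_0([[c_m]]) > 1 - \frac{1}{4^m}$ for all $m \geq 1$; the lemma then upgrades this to an ergodic measure with the desired property. So the entire task reduces to constructing an invariant measure that concentrates on $[[c_m]]$.

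The natural candidate is a weak* accumulation point of the empirical measures $\frac{1}{|c_{k,1}|}\sum_{i=0}^{L_k-1}\delta_{\sigma^i y}$, where $y \in X$ is a point whose leftmost $L_k$ symbols form a word built predominantly out of $c$-blocks. The key structural fact is the definition~\eqref{e:extraword}: the word $c_{k+1}$ consists of $(L_{k+1}/L_k) - n_k$ copies of $c_k$ followed by a single pass $b_{k,1}b_{k,2}\cdots b_{k,n_k}$. By the bound $L_k/L_{k+1} < 1/n_k$ from~\eqref{e:LkLkplus1}, the number of $c_k$-copies $(L_{k+1}/L_k) - n_k$ is an overwhelming fraction of the $L_{k+1}/L_k$ total $k$-blocks inside $c_{k+1}$, so the $c_k$-blocks dominate. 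Thus I would choose $y$ so that its leftmost $L_k$ symbols equal $c_{k,1}$-type structure (a point whose orbit closure forces $c$-dominance at every scale), and let $\xi_0$ be a weak* limit of the empirical averages of $\delta_{\sigma^i y}$ along a suitable subsequence.

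The heart of the argument is then the combinatorial count analogous to~\eqref{eq:second-cond}: I must show that for all large $k$, the proportion of length-$L_m$ windows in the prefix of $y$ that land in $[[c_m]]$ exceeds $1 - \frac{1}{4^{m+1}}$. For this I decompose the prefix into $(m+1)$-blocks, and within each such block count the length-$L_m$ windows sitting inside a $c_m$-subblock (those are automatically in $[[c_m]]$) versus those straddling a boundary or lying inside the short $b_{m,1}\cdots b_{m,n_m}$ tail. An \emph{interior count} controls how many $L_m$-windows inside a single $c_{m+1}$-block (or $b_{m+1,j}$-block — but here $c$-blocks dominate) fail to lie in $[[c_m]]$: these are only the boundary-straddling windows and the windows in the $n_m$-long tail, a negligible $L_m n_m + (\text{boundary})$ out of $L_{m+1}$. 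A \emph{multiplicity count} then shows that $c$-type blocks at each scale constitute a fraction at least $\prod(1 - 1/n_i)$ of all blocks, exactly as in the telescoping product at the end of Proposition~\ref{p:notmanycs}. Imposing the growth condition~\eqref{e:prodcond}, namely $\prod_{i=m}^\infty(1 - 1/n_i) > 1 - \frac{1}{4^{m+1}}$, closes the estimate.

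The main obstacle, and the point requiring the most care, is that $c_{k+1}$ is built from $c_k$-copies \emph{plus} a non-trivial extended-Feldman tail $b_{k,1}\cdots b_{k,n_k}$, so $c$-dominance is not automatic at every scale the way a pure substitution would give; I must verify that these tails contribute only an $O(1/n_k)$ proportion of blocks at each level and that their contributions do not compound badly across the $k - (m+1)$ telescoping steps. In other words, the recursion~\eqref{e:extraword} mixes in Feldman material at every stage, and the product estimate must absorb all of these tail losses simultaneously, which is precisely what the rapid-growth condition~\eqref{e:prodcond} is engineered to guarantee. Once the proportion of good $L_m$-windows is bounded below by $\prod_{i=m}^\infty(1 - 1/n_i)$, combining the weak* closeness~\eqref{eq:weak-star-close}-style estimate with~\eqref{e:prodcond} yields $\xi_0([[c_m]]) > 1 - \frac{1}{4^m}$, and Lemma~\ref{lem:ergodic-large-measure} finishes the proof.
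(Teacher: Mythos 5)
Your proposal is correct and follows essentially the same route as the paper's proof: the same reduction via Lemma~\ref{lem:ergodic-large-measure} to an invariant measure, the same choice of a point whose length-$L_k$ prefix is $c_k$ for every $k$ with a weak* limit of empirical averages, and the same interior/multiplicity counting closed by the growth condition~\eqref{e:prodcond}. (Minor slips only: the normalizing length is $|c_k|=L_k$, not ``$|c_{k,1}|$,'' and windows straddling two adjacent $c_m$'s already lie in $[[c_m]]$ by Definition~\ref{def:double-bracket}, so counting them as failures is merely conservative.)
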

\begin{proof}
This proof is very similar to the proof of Proposition~\ref{p:notmanycs}.  Suppose $\{n_k\}$ satisfies~\eqref{e:prodcond}.  We begin by noting that, because of 
 Lemma~\ref{lem:ergodic-large-measure}, it suffices to show that  there is a $\sigma$-invariant (but not necessarily ergodic) measure $\mu$ which satisfies 
$$ 
\mu([[c_m]])>1-\frac{1}{4^m} \quad \text{ for all }m=1,2,\dots
$$ 
We find $\mu$ by choosing $y\in X$ such that the restriction of $y$ to its first $L_k$ 
symbols is exactly the word $c_k$ for every $k$.  
Just as in Proposition~\ref{p:notmanycs}, we need only show that 
\begin{equation}
\label{eq:second-cond2}
\frac{1}{L_k}\sum_{i=0}^{L_k-1} \delta_{\sigma^iy} \left([[c_m]] \right)   >  1 -\frac{1}{4^{m+1}}.
\end{equation}

Thus it suffices to count the number of $L_m$-sized subwords of $c_k$ that lie in $[[c_m]]$, 
which is bounded below by the number of $L_m$-sized subwords of $c_k$ that lie both in $[[c_m]]$ and within one of the 
$c_{m+1}$ words which makes up $c_k$.  
We bound this last quantity with the product of the interior count $I_m$ and the multiplicity count $M_m$, 
where $I_m$ is the number of $L_m$-sized subwords within one $c_{m+1}$-block  that lie in $[[c_m]]$ 
and $M_m$ is the number of $c_{m+1}$-blocks one has when $c_k$ is thought of as the concatenation of blocks from $\mathcal{B}_{m+1}$.
We count each of these separately.

For the interior count $I_m$, consider one $c_{m+1}$ block and consider all of the $L_m$-sized windows that lie in $[[c_m]]$.  
Given the structure of $c_{m+1}$ (see (\ref{e:extraword})),
every $L_m$-sized subword that lies within the $c_m^{(L_{m+1}/L_m)-n_m}$ portion of $c_{m+1}$ lies in $[[c_m]]$.  There are $L_{m+1}-L_m n_m - L_m$ such subwords and thus
$$I_m \ge L_{m+1}  - L_m (n_m+1).$$

For the multiplicity count  $M_m$, we again
view $c_k$ as the concatenation of blocks from $\mathcal{B}_{m+1}$ and count the number that are equal to 
$c_{m+1}$.  We do this in steps: first view $c_k$ as the concatenation of blocks from $\mathcal{B}_{k-1}$ and note that there must be $L_k/L_{k-1}$ such blocks and all but $n_{k-1}$ are $c_{k-1}$'s. Each of these $c_{k-1}$, in turn, can be thought of as the 
concatenation of blocks from $\mathcal{B}_{k-2}$.  There are $L_{k-1}/L_{k-2}$ such blocks and all but $n_{k-2}$ are $c_{k-2}$'s.  Continue in this vein, and after $k-(m+1)$ steps, we have that the number of the $\mathcal{B}_{m+1}$-blocks that are $c_{m+1}$ is exactly 
$$M_m = \left( \frac{L_k}{L_{k-1}} -n_{k-1} \right)  \left( \frac{L_{k-1}}{L_{k-2}} -n_{k-2} \right)  \ldots \left( \frac{L_{m+2}}{L_{m+1}} -n_{m+1} \right).$$
Combining these two estimates, we have that 
$\frac{1}{L_k}\sum_{i=0}^{L_k-1} \delta_{\sigma^iy} \left([[c_m]]\right)$
is bounded below by
$$\frac{1}{L_k} \left( L_{m+1} - L_m(n_m+1) \right)  \prod_{i=m+2}^k \left(\frac{L_i}{L_{i-1}} - n_{i-1}\right).$$
Since $L_k = \frac{L_k}{L_{m+1}} L_{m+1}$, we can write this as 
$$\frac{1}{L_{m+1}} \left( L_{m+1} - L_m(n_m+1) \right)  
\frac{\prod_{i=m+2}^k \left(\frac{L_i}{L_{i-1}} - n_{i-1}\right)} {\prod_{i=m+2}^k \frac{L_i}{L_{i-1}} }$$
$$\frac{1}{L_{m+1}} \left( L_{m+1} - L_m(n_m+1) \right)  \prod_{i=m+2}^k  \left( 1 - n_{i-1} \frac{L_{i-1}}{L_{i}}\right).$$

Since $L_{i} = L_{i-1} (1+n_{i-1}^{4n_{i}+3})$, we have ${L_{i-1}}/{L_{i}} = 1/{(1+{n_{i-1}^{4n_{i}+3}})}$ and thus $(n_{i-1}L_{i-1})/L_{i} \le {1}/{n_{i-1}}$.  Similarly, $(n_m+1){L_m}/{L_{m+1}}\le {(n_m+1)}/{n_m^{4n_{m+1}+3}} \le {1}/{n_m}$.

We thus have
$$\frac{1}{L_k}\sum_{i=0}^{L_k-1} \delta_{\sigma^iy} \left([[c_m]]\right) \ge 
\left(1-\frac{1}{n_m}\right) \prod_{i=m+2}^{k} \left( 1 - \frac{1}{n_{i-1}}\right) .$$

It then follows from~\eqref{e:prodcond} that 
$\frac{1}{L_k}\sum_{i=0}^{L_k-1} \delta_{\sigma^iy} \left([[c_m]]\right)\ge 1-\frac{1}{4^{m+1}}$. 
\end{proof}

\begin{theorem}\label{t:lb}
If the sequence $\{n_k\}$ grows sufficiently rapidly 
then $X$ carries a measure $\xi$ such that $(X,\sigma,\xi)$ is loosely Bernoulli.
\end{theorem}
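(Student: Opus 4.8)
The plan is to let $\xi$ be the ergodic measure produced in Proposition~\ref{lem:notmanybs}, so that $\xi([[c_m]])\to 1$, and to exploit the fact that each word $c_m$ is built almost entirely out of the single symbol $c_0$. The underlying heuristic is that a $\xi$-typical point reads the letter $c_0$ with frequency close to $1$, so that the length-$n$ words carrying most of the $\xi$-mass are nearly constant strings of $c_0$'s, and any two such words match almost perfectly in $\overline f$ by aligning their $c_0$'s. Concretely, I would first record the elementary matching bound: if $w,w'$ both have length $n$ and each contains at least $(1-\eta)n$ occurrences of $c_0$, then pairing the first $\lceil(1-\eta)n\rceil$ occurrences of $c_0$ in $w$ with those in $w'$ is an order-preserving match, whence $\overline f^c(w,w')\ge 1-\eta$ and $\overline f(w,w')\le\eta$. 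This reduces the theorem to producing, for each $\varepsilon>0$ and all large $n$, a set $W\subset\mathcal L_n(X)$ with $\xi([W])>1-\varepsilon$ each of whose words is dominated by $c_0$, which is precisely the content of Definition~\ref{def:looselyBernoulli}.

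Next I would quantify the density of $c_0$ in $c_m$. Writing $p_m$ for the proportion of the letter $c_0$ among the $L_m$ symbols of $c_m$, the recursion~\eqref{e:extraword} expresses $c_{m+1}$ as $(L_{m+1}/L_m)-n_m$ consecutive copies of $c_m$ followed by the tail $b_{m,1}\cdots b_{m,n_m}$ of length $n_mL_m$; discarding the tail gives $p_{m+1}\ge p_m\bigl(1-n_mL_m/L_{m+1}\bigr)$. Since $n_mL_m/L_{m+1}\le 1/n_m$ by~\eqref{eq:lengthL} and $p_0=1$, a telescoping estimate yields the uniform bound $1-p_m\le\sum_{i\ge 0}1/n_i$ for every $m$, which is as small as desired once $\{n_k\}$ satisfies a condition such as~\eqref{e:nkcond}. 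The consequence I really want is uniform over all windows: because the doubled word $c_mc_m$ contains only $2L_m(1-p_m)$ non-$c_0$ symbols altogether, \emph{every} subword of $c_mc_m$ of length $L_m$ --- equivalently, every word defining $[[c_m]]$ --- contains at least $L_m(2p_m-1)$ occurrences of $c_0$.

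I would then transfer this to the single-letter cylinder $[c_0]$. By $\sigma$-invariance, $\xi([c_0])$ equals the $\xi$-average of the $c_0$-frequency in the length-$L_m$ prefix of a point; bounding that frequency below by $2p_m-1$ on $[[c_m]]$ and by $0$ elsewhere gives $\xi([c_0])\ge(2p_m-1)\,\xi([[c_m]])$, and letting $m\to\infty$, using Proposition~\ref{lem:notmanybs} together with the uniform bound $p_m\ge 1-\sum_{i\ge 0}1/n_i$, makes $\xi([c_0])$ as close to $1$ as the growth of $\{n_k\}$ permits. To conclude, fix $\varepsilon>0$, pick $\eta<\varepsilon$ with $\xi([c_0])>1-\eta/2$, and combine the Birkhoff ergodic theorem with Egorov's theorem to find an $N$ and a set $G$ with $\xi(G)>1-\varepsilon$ on which, for all $n\ge N$, the frequency of $c_0$ in $x_{[0,n-1]}$ exceeds $1-\eta$. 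Taking $W=\{x_{[0,n-1]}:x\in G\}$ gives $\xi([W])\ge\xi(G)>1-\varepsilon$, while the matching bound of the first paragraph yields $\overline f(w,w')\le\eta<\varepsilon$ for all $w,w'\in W$; since $X$ has zero entropy, Definition~\ref{def:looselyBernoulli} shows $(X,\sigma,\xi)$ is loosely Bernoulli.

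I expect the main obstacle to be the passage from the single-scale information provided by Proposition~\ref{lem:notmanybs} to a statement valid for all large lengths $n$; the device that resolves it is to recast everything as control of the frequency of the one letter $c_0$, after which Birkhoff and Egorov handle all scales at once. The only genuine computation is the density estimate $1-p_m\le\sum_{i\ge 0}1/n_i$, and the sole point where a further growth hypothesis on $\{n_k\}$ enters is in forcing this sum to be small.
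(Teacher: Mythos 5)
There is a genuine gap, and it is a quantifier problem that your approach cannot absorb. Your entire scheme reduces $\overline f$-closeness to the single number $\xi([c_0])$: at the step ``pick $\eta<\varepsilon$ with $\xi([c_0])>1-\eta/2$'' you implicitly assume $\xi([c_0])$ can be pushed above $1-\varepsilon/2$ for each $\varepsilon$. But the sequence $\{n_k\}$, and with it the measure $\xi$, is fixed once and for all, while Definition~\ref{def:looselyBernoulli} quantifies over \emph{all} $\varepsilon>0$; and $\xi([c_0])$ is strictly less than $1$ by a constant determined by the construction. Indeed, every point of $X$ is a concatenation of $\mathcal{B}_1$-words, each of which contains non-$c_0$ letters, so runs of $c_0$ have length at most roughly $L_1$ (equivalently, by minimality the letter $a_{0,1}$ occurs syndetically with some gap $G$), whence $\xi([c_0])\le 1-1/G=1-\delta_0$ with $\delta_0>0$ fixed. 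By the Birkhoff theorem the $c_0$-frequency of $x_{[0,n-1]}$ then converges $\xi$-a.e.\ to $1-\delta_0$, so for $\eta<\delta_0$ your Egorov set $G$ is \emph{null} and $W$ is empty: the argument certifies $\overline f$-closeness only down to the fixed threshold $\delta_0$, never for arbitrarily small $\varepsilon$. (Your intermediate computations are fine --- the density estimate $1-p_m\le\sum_i 1/n_i$, the window bound $L_m(2p_m-1)$, and the inequality $\xi([c_0])\ge(2p_m-1)\,\xi([[c_m]])$ are all correct --- but they only yield a single fixed precision, as your own phrase ``as close to $1$ as the growth of $\{n_k\}$ permits'' betrays.)

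The paper's proof repairs exactly this by matching at the scale of whole blocks rather than the single letter $c_0$, so that the precision improves with the scale. For two words $x,y$ of length $L_k$ lying in $[[c_k]]$ (a set of $\xi$-measure close to $1$ by Proposition~\ref{lem:notmanybs}), one deletes from each the at most $(n_{k-1}+2)L_{k-1}$ indices coming from the Feldman tail of $c_k$ and from partial copies of $c_{k-1}$, leaving powers $c_{k-1}^r$ and $c_{k-1}^s$ with $|r-s|\le 2$; since \emph{identical} strings match perfectly in $\overline f$ (non-$c_0$ letters included), at most $2L_{k-1}$ further deletions give $\overline f(x,y)\le L_{k-1}(n_{k-1}+4)/L_k<1/n_{k-1}$. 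The crucial gain over matching only $c_0$'s is that $1/n_{k-1}\to 0$ as $k\to\infty$, so the condition ``for all $\varepsilon$'' is met by choosing $K=K(\varepsilon)$ after $\varepsilon$ is given, with the growth conditions on $\{n_k\}$ untouched. If you want to keep your frequency framework, you would have to replace the fixed letter $c_0$ by the block $c_{k-1}$ at scale $k$ --- i.e.\ let the ``letter'' grow with $\varepsilon$ --- which is precisely the paper's block-matching argument in disguise.
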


\begin{proof}
Let $\{n_k\}$ be a sequence that satisfies~\eqref{e:prodcond}.  
We apply Proposition~\ref{lem:notmanybs} to obtain a measure $\xi$ satisfying~\eqref{e:bigmsrc}.
 
Let $K$ be large enough such that both $\frac{1}{n_{K-1}} < \varepsilon$ and $\xi([[c_K]]) > 1-\varepsilon$.  Take $k\ge K$.  Let $x, y\in\mathcal{L}_{L_k}(X)$ be two words that occur as subwords of $c_kc_k$, so each is a word of length  $L_k$ which looks like the end of a $c_k$ concatenated with the beginning of a $c_k$.  

Recall that $c_k$ is a concatenation of many copies of $c_{k-1}$ followed by the block $b_{k-1,1}, \ldots , b_{k-1,n_{k-1}}$.
Therefore, by eliminating at most $L_{k-1}(n_{k-1}+2)$ indices from both $x$ and $y$, we can remove any indices corresponding to the extended Feldman words and any partial copies of $c_{k-1}$.  What is left  now are words of the form $c^r_{k-1}$ and $c^s_{k-1}$, where $r$ and $s$ could differ as different choices of $x$ and $y$ might necessitate removal of a different number of indices. 
However, $\frac{L_k}{L_{k-1}} - n_{k-1}-2 \le r, s \le \frac{L_k}{L_{k-1}} - n_{k-1}$, $\rm{i.e.}$ $|r-s|\le 2$.  Thus by throwing out at most another $2L_{k-1}$ indices, we obtain identical strings.

This means that 
$$\overline{f}(x,y) \le\frac{1}{2L_k} \left( L_{k-1}(n_{k-1}+2)+2L_{k-1}\right) = \frac{L_{k-1}(n_{k-1}+2)}{L_k} <  \frac{1}{n_{k-1}} < \varepsilon .$$
Taking $W$ and $N$ in  Definition~\ref{def:looselyBernoulli} to be the set $[[c_K]]$ and our choice of $K$ respectively yields the result.
\end{proof}

\section{Proof of Theorem~\ref{thm:main}}
\label{sec:proofofmain}
We are now ready to prove our main result, showing that the system $(X, \sigma)$ we constructed carries 
both a loosely Bernoulli measure and a non-loosely Bernoulli measure. 
\begin{proof}[Proof of Theorem~\ref{thm:main}]
We fix a non-decreasing sequence $\{p_n\}_{n\in\mathbb N}$ satisfying 
$$\liminf_{n\to\infty}\frac{p_n}{n}=\infty\quad\text{and}\quad\limsup_{n\to\infty}\frac{\log p_n}{n}=0.$$
We choose a sequence $\{n_k\}_{k\geq 0}$ satisfying all of the growth conditions needed to apply our arguments.  
More precisely, we choose this sequence such that  $n_0\ge 2$ and $n_k\rightarrow\infty$, and the 
growth conditions corresponding to~\eqref{e:nkcond}, \eqref{e:n_k}, and~\eqref{e:prodcond} are all satisfied, 
meaning that:
$$\prod_{k=0}^\infty\frac{n_k}{n_k-2}\le2\quad ; \quad \sum_{k=0}^\infty\frac{2}{n_k}\le\frac1{32} \quad ; \quad 
\prod_{k=0}^\infty\frac{n_k^{4n_{k+1}+3}}{n_k^{4n_{k+1}+3}+1}>\frac78 ; 
$$
$$
\prod_{k=m}^\infty\left(1-\frac1{n_k}\right)>1-\frac1{4^{m+1}}.
$$
We remark that while it is possible to simplify the assumptions on the growth, 
 as some of these conditions imply others, 
for clarity in how they are used, we keep each of them. As these are all growth conditions on the sequence, they are clearly compatible.   For example, the sequence $n_k=4^{k+4}$ suffices. 

We inductively modify the sequence $\{n_k\}_{k\geq 0}$, again only possibly increasing the growth rate.   Given $n_k$, choose
$n_{k+1} > n_k$ such that, in addition to the growth conditions already satisfied (meaning conditions~\eqref{e:nkcond}, \eqref{e:n_k}, and~\eqref{e:prodcond}), we also have that if $m=L_kn_k^{2n_{k+1}+2}$, then 
\begin{equation}
\label{eq:choice-of-m}
p_m > k(6+3n_k)m.
\end{equation}

Now construct a subshift $X$ as described in Section  \ref{sbsct_subshift}.  By Proposition~\ref{p:minimal}, Theorem~\ref{t:nlb}, and Theorem~\ref{t:lb} this subshift $X$ is minimal, and there exist $\nu,\xi\in\mathcal M_e(X)$ such that $(X,\sigma, \nu)$ is not loosely Bernoulli and $(X,\sigma, \xi)$ is loosely Bernoulli.

We now check that $X$ satisfies the complexity condition 
\begin{equation}\label{e:liminf}
\liminf_{n\to\infty}\frac{P_X(n)}{p_n}=0. 
\end{equation}
Fix $k$ and consider the words of length $m \, (=L_kn_k^{2n_{k+1}+2})$.
Since any point $x\in X$ can be written as a  bi-infinite 
concatenation of elements of $\mathcal{B}_{k+1}$, we can count the number of words of length $m$ by counting the number found  entirely within 
an element in $\mathcal{B}_{k+1}$ or overlapping two concatenated elements of $\mathcal{B}_{k+1}$. 

To make this count, recall the forms of the words $b_{k+1,i}$ and $c_{k+1}$ are given in~\eqref{e:setofwords} and~\eqref{e:extraword}.
The word  $b_{k+1,1}$ has each $b_{k,j}$ repeated exactly $n_k^{2(1+n_{k+1})}$ times, meaning that this portion of $b_{k+1,1}$ has length 
$L_k n_k^{2(1+n_{k+1})} = m$.  The other $b_{k+1,i}$ have even longer lengths of repeated $k$-words.

We first count the number of words of length $m$ that are subwords of some $b_{k+1,i}$.  Note that for each choice of 
$i$, one of following occurs:
\begin{enumerate}[label=(\roman*)]
  \item The word of length $m$ is a subword of a repeated $k$-word $b_{k,j}^{n_{k}^{2(i+n_{k+1})}}$ for $j=1,\ldots,n_{k}$.  The repetition of $b_{k,j}$ means the number of distinct words of length $m$  of this type for a specific $j$ is just the length of $b_{k,j}$ and thus altogether we have $L_k n_k$ number of such words of length $m$.
  \item The word of length $m$ overlaps two consecutive repeated $k$-words and thus is a subword of 
$b_{k,j}^{n_{k}^{2(i+n_{k+1})}}b_{k,j+1}^{n_{k}^{2(i+n_{k+1})}}$.  For a specific $j$, we can count the number of distinct words of length $m$ by counting the number of locations within the word at which it switches from a $b_{k,j}$  to a $b_{k,j+1}$: this number is $m$.  Since $j$ ranges from 1 to $n_k$, this yields a total of $mn_k$ distinct words.
  \item The word of length $m$ lies towards the end of $b_{k+1,i}$ and thus, for all $i$, is a subword of 
  $b_{k,{n_{k}}}^{n_{k}^{2(i+n_{k+1})}} c_k$.  The number of distinct words of length $m$ can be counted by simply noting the number of places at which $c_k$ can begin in the word, which is $L_k$.
\end{enumerate}
To count the number of words of length $m$ that are subwords of $c_{k+1}$, note that one of two situations occur:
\begin{enumerate}[label=(\roman*)]
   \item The word of length $m$ is a subword of the repeated $c_k$'s.  There are $L_k$ distinct such subwords.
   \item The word of length $m$ lies towards the end of $c_{k+1}$ and thus is a subword of $c_k \ldots c_k b_{k,1}b_{k,2}\dots b_{k,n_{k}}$.   We can count this by counting the number of locations within the word at which it switches from the $c_k$ to $b_{k,1}$, which is $n_k L_k$.
\end{enumerate}
Thus the number of words of length $m$ that are subwords of some element $\mathcal{B}_{k+1}$ is 
$2L_kn_k + mn_k +  2L_k   $.

We next count the number of words of length $m$ that overlap two concatenated elements of $\mathcal{B}_{k+1}$. There are four possible combinations for such concatenations:

$\begin{array}{llll}
{\rm{(i)}} \,\, b_{k+1,i}b_{k+1,j} & {\rm{(ii)}} \,\, c_{k+1}b_{k+1,j} & {\rm{(iii)}} \,\, b_{k+1,j} c_{k+1} & {\rm{(iv)}} \,\, c_{k+1} c_{k+1}
\end{array}$

To count the number of combinations for the first situation,  
we note that for all choices of $i$ and $j$, the word of length $m$ is a subword of 
   $b_{k,n_k} \ldots b_{k,n_k} c_k  b_{k,1} \ldots. b_{k,1}$.  To count the number of distinct words of length $m$ by noting that there are exactly $m$ locations where we can see the beginning of $c_k$ within it, which leads to a count of $m$ distinct words.
The three other possible combinations can be counted in a similar manner, with 
each yielding at most $m$ distinct words.  Thus we obtain a total of 4$m$ words of this type.

Combining these counts and using that $L_k\le m$, we have that $P_X(m) \le m(3n_k+6)$.  But by~\eqref{eq:choice-of-m}, we have that 
$mk(3n_k+6) < p_m$.  Therefore $P_X(m)\le p_m/k$.  But recall that $m=L_kn_k^{2n_{k+1}+2}$  so this shows that there is a sequence, indexed by $k$, showing that~\eqref{e:liminf} is satisfied.

Finally, to see that $(X,\sigma)$ has zero topological entropy note that by~\eqref{e:liminf} there are infinitely many $n$ for which $P_X(n)\leq p_n$.  Combining this fact with the subexponential growth condition on the sequence $\{p_n\}$ given in~\eqref{eq:subexp}, we have that 
$$
\liminf_{n\to\infty}\frac{\log P_X(n)}{n}\leq\limsup_{n\to\infty}\frac{\log p_n}{n}=0.
$$
Since the limit defining topological entropy exists, it follows that 
\begin{equation*}
h_{top}(X)=\lim_{n\to\infty}\frac{\log P_X(n)}{n}=\liminf_{n\to\infty}\frac{\log P_X(n)}{n}=0.\quad\qedhere
\end{equation*}
\end{proof}

\section*{Appendix: Ferenczi's theorem on the rank of systems of linear
complexity}
\label{appendix}
We would like to conclude from our work here that  
a minimal system that has a non-loosely Bernoulli measure
must have $\liminf_{n\to\infty} P_X(n)/n=\infty$.  If we knew that a minimal subshift whose complexity function satisfies $\liminf_{n\to\infty} P_X(n)/n<\infty$ has finite rank, then as discussed previously we would have that all measures supported on such a subshift are loosely Bernoulli and our conclusion would follow.  
While Ferenczi~\cite{Fer} shows that a minimal subshift whose complexity
function satisfies $P_X(n)=O(n)$ has finite rank, essentially the same proof can be used to show that the result holds under the slightly weaker hypothesis $\liminf_{n\to\infty} P_X(n)/n<\infty$.   As we were not able to find the $\liminf$ version of this result in the
literature, for completeness we include the proof which demonstrates how to use Ferenczi's argument to get this result.

First we recall what it means for a system to have finite rank, taking
the definition directly from~\cite[Definition 8]{Fer}:
\begin{definition*}
A subshift $(X,\sigma)$ and invariant measure $\mu$ has {\em rank at
most $r$} if for every partition $P=(A_1,A_2,\dots,A_{|P|})$ and every
$\varepsilon>0$, there exist $r$ many subsets $F_i\subseteq X$, $r$ many
positive integers $h_i$, and a partition
$P^{\prime}=(B_1,B_2,\dots,B_{|P^{\prime}|})$ such that
\begin{enumerate}
\item all sets of the form $T^jF_i$, where $1\leq i\leq r$ and $0\leq
j<h_i$, are pairwise disjoint;
\item
$$
d(P,P^{\prime}):=\min\left\{\sum_{i=1}^{\max\{|P|,|P^{\prime}|\}}\mu(A_i\triangle
B_{\sigma(i)})\colon\sigma\in\mathrm{Sym}(\max\{|P|,|P^{\prime}|\})\right\}<\varepsilon
$$
where $P$ or $P^{\prime}$ have been padded with null sets to give them
the same number of elements;
\item the elements of $P^{\prime}$ can be expressed as unions 
of elements of the partition consisting of all sets of
the form $T^jF_i$, where $1\leq i\leq r$ and $0\leq j<h_i$, as well as
$X\setminus\bigcup_i\bigcup_jT^jF_i$.
\end{enumerate}
The subshift  $(X,\sigma)$ has {\em finite rank} if it has rank at most
$r$ for some $r\in\N$.
\end{definition*}

While the following lemma is slightly stronger than~\cite[Proposition 4]{Fer}, it readily follows from the proof given there.  For completeness we include the argument.
\begin{lemma*}
Let $(X,\sigma)$ be a subshift and let $\mu$ be an invariant measure
supported on $X$.  If
\begin{equation}\label{eq:bound}
\liminf_{n\to\infty}\frac{P_X(n)}{n}<\infty,
\end{equation}
then $(X,\sigma,\mu)$ has finite rank.
\end{lemma*}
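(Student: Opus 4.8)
The plan is to follow Ferenczi's argument while replacing his hypothesis $P_X(n)=O(n)$ by the weaker $\liminf$ condition, which only forces control of the complexity along a subsequence of lengths. The first step is to extract this subsequence. Writing $L=\liminf_{n\to\infty}P_X(n)/n<\infty$, I would fix a constant $C>L$ and record that $P_X(n)\le Cn$ for infinitely many $n$. More usefully, I would observe that the first difference $P_X(n+1)-P_X(n)$, which counts (with multiplicity) the right-special words of length $n$, satisfies $\liminf_n\bigl(P_X(n+1)-P_X(n)\bigr)\le L$: if instead $P_X(n+1)-P_X(n)\ge L+\delta$ held for all large $n$, then $P_X(n)$ would grow at asymptotic rate at least $L+\delta$, contradicting the definition of $L$. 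Since these differences are integers, there is a constant $r_0$ depending only on $L$ and infinitely many lengths $n$ at which the number of right-special words is at most $r_0$. These are the \emph{good lengths} along which the tower construction is carried out.

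At a good length $n$, I would build a Kakutani--Rokhlin castle adapted to the combinatorial structure of the language, exactly as in Ferenczi's construction. The bounded number of right-special words means that, away from a controlled set of branching positions, the word read forward from a generic point is deterministic over a long stretch; grouping positions according to the deterministic block lying between consecutive branchings organizes most of the space into Rokhlin towers whose levels are cylinder sets. The crucial point is that the number of towers is bounded by a constant $r$ determined only by $r_0$ (hence only by $L$), uniformly in the approximation parameters, because each tower corresponds to one of the finitely many deterministic columns permitted by the branching bound.

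I would then verify the three conditions in the definition of rank at most $r$. Given an arbitrary finite partition $P$ and $\varepsilon>0$, I would first approximate $P$ to within $\varepsilon/2$ by a partition measurable with respect to cylinders of a fixed length $\ell$; this is possible because the cylinder algebra generates the Borel $\sigma$-algebra. Choosing a good length $n\gg\ell$ makes the tower heights large, so the levels of the castle (cylinders of length comparable to $n$) refine the $\ell$-cylinder algebra and therefore approximate $P$ to within $\varepsilon$, yielding both the metric estimate $d(P,P')<\varepsilon$ and the expressibility of the elements of $P'$ as unions of tower levels. Disjointness of the sets $T^jF_i$ is built into the first-return construction, and letting $n\to\infty$ along good lengths drives the measure of the uncovered remainder to zero by Poincar\'e recurrence. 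Since the number of towers never exceeds the fixed bound $r$, the system has rank at most $r$.

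The main obstacle I anticipate is the passage from the branching bound to a \emph{uniform} bound on the number of towers under the $\liminf$ hypothesis. Because complexity is controlled only along a sparse set of lengths, one cannot follow the word structure deterministically over an unbounded horizon; one must cut the orbits precisely at the good lengths and argue that the bound $r_0$ on right-special words translates into a bound of comparable order on the number of distinct columns, while simultaneously showing that the positions lying inside branching regions, which are the source of the incomplete towers and of the uncovered remainder, carry small measure. Controlling this error term and showing it tends to zero as the good length grows is the technical heart of the argument, and is precisely where Ferenczi's estimates, adapted to the $\liminf$ setting, are required.
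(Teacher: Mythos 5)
Your opening move is correct and is exactly the paper's: since the first differences $P_X(n+1)-P_X(n)$ are nonnegative integers counting right-special words with multiplicity, the $\liminf$ hypothesis forces them to be bounded by some integer $R$ along an increasing sequence of lengths $\{n_j\}$, and these are the good lengths along which everything is done. From that point on, however, your proposal is an outline rather than a proof, and you say so yourself: the uniform bound on the number of columns and the control of the ``uncovered remainder'' are deferred to ``Ferenczi's estimates, adapted to the $\liminf$ setting.'' That deferred step is the entire content of the lemma beyond the subsequence extraction, so as written there is a genuine gap: you never specify the tower bases or heights, and without them the disjointness, covering, and refinement claims --- the three conditions in the definition of rank at most $r$ --- are unverified.

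Moreover, the difficulty you flag as the technical heart --- incomplete towers forming an error set whose measure must be driven to zero by Poincar\'e recurrence as $n\to\infty$ along good lengths --- is a red herring: with the right choice of castle there is no error set at all, and no limiting or measure estimate is needed. The paper takes as bases the cylinders $[F_i^j]$ of the words of length $n_j$ occurring immediately after a branching, i.e.\ the rightmost subwords of length $n_j$ of those words in $\mathcal L_{n_j+1}(X)$ whose leftmost subword of length $n_j$ is right-special (after passing to a further subsequence their number is a constant $r\le R$), and as heights $h_k$ the least $h>0$ with $\sigma^{h}[F_k^j]\cap\bigcup_i[F_i^j]\neq\emptyset$. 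With these choices the levels $\sigma^k[F_i^j]$, $1\le i\le r$, $0\le k<h_i$, are pairwise disjoint by minimality of the $h_i$ and in fact partition $X$ exactly: reading backward from any point, one meets an occurrence of some $F_i^j$ within the last $h_i$ coordinates, at the most recent branch in its past. And because a word of length $n_j$ extends uniquely to the right until the next branch, each level lies inside a single $n_j$-cylinder, so the partition $Q_j$ into $n_j$-cylinders is coarser than the level partition; approximating an arbitrary $P$ within $\varepsilon$ by partitions coarser than $Q_j$ (possible since cylinder partitions generate the Borel $\sigma$-algebra) then yields all three conditions at once, with the single fixed $r$. In short: your strategy is the paper's strategy, but the castle must be constructed exactly as above for the argument to close, and the recurrence-based remainder control you propose in its place is not an argument.
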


\begin{proof}
By~\eqref{eq:bound}, there is an integer $R$ and an increasing sequence
$\{n_j\}_{j=1}^{\infty}$ such that
\begin{equation}\label{eq:bound2}
P_X(n_j+1)-P_X(n_j)\leq R
\end{equation}
for all $j\geq 1$.  By~\eqref{eq:bound2}, there are at most $R$ many
elements of $\mathcal{L}_{n_j+1}(X)$ whose leftmost subword of length
$n_j$ is a word that fails to extend uniquely to its right.  Passing to
a subsequence if necessary, we can assume there exists $r\leq R$ such
that for all $j$ there are exactly $r$ many elements of
$\mathcal{L}_{n_j}(X)$ that appear as the rightmost subword of length
$n_j$ in a word in $\mathcal{L}_{n_j+1}(X)$ whose leftmost subword of
length $n_j$ fails to extend uniquely to its right.  For each $j$
enumerate these words as
$$
F_1^j,F_2^j,\dots,F_r^j\in\mathcal{L}_{n_j}(X).
$$
Now for each $1\leq k\leq r$ let $h_k\in\N$ be the smallest positive
integer for which there exists $1\leq i\leq r$ such that
$[F_i^j]\cap\sigma^{h_k}[F_k^j]\neq\emptyset$.

Note that all sets of the form $\sigma^k[F_i^j]$, where $1\leq i\leq r$
and $0\leq k<h_i$, are pairwise disjoint by definition of $h_i$.   (These sets are
actually a partition of $X$ and there is no need to add the complement
of their union.)  Also
note that the elements of the partition $Q_j$ of $X$ into cylinder sets of length
$n_j$ are unions of elements of the partition given by sets of the form $\sigma^k[F_i^j]$, where $1\leq i\leq r$ and $0\leq k<h_i$, meaning that $Q_j$ is a coarser partition. 
Finally, note that any partition $P$ of $X$ can be
approximated arbitrarily well by partitions coarser than the partitions
into cylinder sets of increasing length.  Therefore for any $P$ and any
$\varepsilon>0$ there exists $j$ such that some partition, $P^{\prime}$,
coarser than $Q_j$ satisfies $d(P,P^{\prime})<\varepsilon/2$ and such that
$$
d(Q,\{\sigma^k[F_i^j]\colon1\leq i\leq r, 0\leq k<h_i\})<\varepsilon/2.
$$
Thus $(X,\sigma,\mu)$ has rank at most $r$, and in particular it has finite rank.
\end{proof}

\end{document}